\documentclass[11pt, reqno]{amsart}
\usepackage[dvipsnames]{xcolor}
\usepackage[
    pagebackref,
    colorlinks=true,
    urlcolor=NavyBlue,
    linkcolor=NavyBlue,
    citecolor=NavyBlue
]{hyperref}
\usepackage{cleveref}
\usepackage{graphicx}
\usepackage{epsfig}
\usepackage[latin1]{inputenc}
\usepackage{amsmath}
\usepackage{amsfonts}
\usepackage{amssymb}
\usepackage{amsthm}
\usepackage{amscd}
\usepackage{verbatim}
\usepackage{subfigure}
\usepackage{caption}
\usepackage{pinlabel}
\usepackage{stmaryrd}
\usepackage{enumerate, enumitem}
\usepackage{todonotes}
\usepackage{bm}
\usepackage{thmtools}
\usepackage{thm-restate}
\usepackage{lipsum}
\usepackage{setspace}
\usepackage{mathtools}
\usepackage[all]{xypic}
\usepackage[abs]{overpic}
\usepackage[normalem]{ulem}
\usepackage[alphabetic,backrefs,msc-links]{amsrefs}

\allowdisplaybreaks

\usepackage{tikz}
\usetikzlibrary{arrows}
\usetikzlibrary{decorations.pathreplacing}
\usepackage{verbatim}
\usetikzlibrary{cd}
\tikzset{taar/.style={double, double equal sign distance, -implies}}
\tikzset{amar/.style={->, dotted}}
\tikzset{dmar/.style={->, dashed}}
\tikzset{aar/.style={->, very thick}}

\newtheorem{theorem}{Theorem}[section]

\newtheorem{lemma}[theorem]{Lemma}
\newtheorem{proposition}[theorem]{Proposition}
\newtheorem{conjecture}[theorem]{Conjecture}

\newtheorem{corollary}[theorem]{Corollary}

\newtheorem{question}[theorem]{Question}

\theoremstyle{definition}

\theoremstyle{remark}
\newtheorem{remark}[theorem]{Remark}

\newenvironment{reptheorem}[1]
{\begin{trivlist}
\item[\hskip\labelsep{\bfseries Theorem~\ref{#1}.}]\itshape}
{\end{trivlist}}

\newenvironment{repcorollary}[1]
{\begin{trivlist}
\item[\hskip\labelsep{\bfseries Corollary~\ref{#1}.}]\itshape}
{\end{trivlist}}

\def\F{\mathbb{F}}

\def\Z{\mathbb{Z}}

\def\d{\partial}

\def\CFK{\mathit{CFK}}

\newcommand\HFKm{\mathit{HFK}^-}
\newcommand\HFKhat{\widehat{\mathit{HFK}}}

\newcommand{\mh}{h_{\min}}
\newcommand{\hgt}{\operatorname{ht}}

\author[J.\ Hom]{Jennifer Hom}
\address {School of Mathematics, Georgia Institute of Technology, Atlanta, GA 30332}
\email{hom@math.gatech.edu}

\author[J.\ Park]{JungHwan Park}
\address{Department of Mathematical Sciences and RIM, Seoul National University, Seoul 08826}
\email{jungpark0817@snu.ac.kr}

\numberwithin{equation}{section}
\numberwithin{equation}{section}

\title{Ribbon concordance and cabling}

\begin{document}

\begin{abstract}
We study ribbon concordances to cable knots. We formulate a conjecture predicting that any nontrivial knot admitting a ribbon concordance to a $(p,q)$-cable must itself be a $(p,q)$-cable. We prove the conjecture when the knot admitting the ribbon concordance is already a cable of the same companion, is a torus knot, or has genus one. We also verify it for a broad class of target cables. The proofs use a minimum-height invariant defined from the immersed-curve formulation of knot Floer homology. This invariant obstructs ribbon concordances and implies that any nontrivial knot admitting a ribbon concordance to a fibered cable knot is prime. We also establish genus bounds for knots admitting ribbon concordances to cable knots.
\end{abstract}

\maketitle

\section{Introduction}

Knot concordance is an equivalence relation, but it admits a natural directed
refinement. Recall that two knots $J$ and $K$ in $S^3$ are
\emph{concordant} if they cobound a smoothly properly embedded annulus
\[
        C\subset S^3\times[0,1],
        \qquad
        \partial C=-J\times\{0\}\cup K\times\{1\}.
\]
Such a concordance is called a \emph{ribbon concordance} from $J$ to $K$
if the height function on $C$, induced by projection to $[0,1]$, has no local
maxima. In this case, we write
\[
        J\leq K.
\]
A knot is called \emph{slice} if it is concordant to the unknot, and
\emph{ribbon} if there exists a ribbon concordance from the unknot to it.

Since its introduction by Gordon~\cite{Gordon-ribbon}, ribbon concordance has been widely studied;
see, for example,~\cites{Gilmer,silver,Miyazaki1994, Zemke-ribbon,LevineZemke,Kang:2022,DLVW:2022, Agol, FMZ:2025, BaldwinSivek, BaldwinHanselmanSivek, AgolRen}. Recently, Agol~\cite[Theorem~1.1]{Agol} proved the striking result that the relation $\leq$ defines a partial order on the set of isotopy classes of knots, resolving a conjecture of Gordon~\cite[Conjecture~1.1]{Gordon-ribbon}. We refer to this
partial order as the \emph{ribbon concordance order}.

In this article, we initiate the study of ribbon concordance to cable knots. Recall that every nontrivial knot is either a torus knot, a hyperbolic knot, or a satellite knot~\cite{Thurston}. From the perspective of ribbon concordance, satellite knots lie between two familiar extremes. Torus knots are rigid: Gordon~\cite{Gordon-ribbon} proved that they are minimal with respect to the ribbon concordance order. Thus, if $J \leq T$ for a torus knot $T$, then $J=T$. Hyperbolic knots, by contrast, are flexible in this respect: Silver and Whitten~\cite{SilverWhitten} showed that, for every knot $J$, there exists a hyperbolic knot $H$ such that $J \leq H$; see also \cite{Myers}.

Satellite knots therefore provide the remaining natural setting in which to seek meaningful restrictions on predecessors under ribbon concordance. Although one can ask this question for arbitrary satellite knots, we focus here on one of the simplest satellite operations:~\emph{cabling}. Let $K_{p,q}$ denote the $(p,q)$-cable of $K$, where $p$ denotes the longitudinal winding number. Recall that $K_{p,q}$ is isotopic to $K_{-p,-q}$; hence, throughout the article, we assume without loss of generality that $p$ is positive.

\begin{conjecture}\label{conj:cabling}
Let $K$ be a knot, and let $p>1$. Suppose that $J$ is a nontrivial knot~with
\[
        J\leq K_{p,q}.
\]
Then $J$ is itself the $(p,q)$-cable of some knot.
\end{conjecture}

The aim of this article is to provide evidence for this conjecture using knot Floer homology. We begin with a complete answer to \Cref{conj:cabling} under the additional hypothesis that the predecessor is a cable of the same companion knot. This result shows that, within the family of cables of a fixed knot $K$, the ribbon concordance order is rigid:

\begin{theorem}\label{thm:ribboncable}
Let $K$ be a knot, and let $p>1$. Suppose that $J$ is a cable of $K$ with
\[
        J\leq K_{p,q}.
\]
Then $J$ is the $(p,q)$-cable of $K$.
\end{theorem}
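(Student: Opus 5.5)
The plan is to combine the two standard Floer-theoretic consequences of a ribbon concordance with the cabling formula for immersed curves. Write $J=K_{r,s}$ with $r\geq 1$; the case $r=1$ means $J=K$. First I would dispose of the degenerate case where $K$ is the unknot. Then $K_{p,q}$ is the torus knot $T_{p,q}$, which is minimal in the ribbon concordance order by Gordon. Hence $J=T_{p,q}$, as claimed. From now on assume $K$ is nontrivial.

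The two inputs are the following.
\begin{enumerate}
\item[(a)] By Zemke, a ribbon concordance $C$ from $J$ to $K_{p,q}$ induces a map $F_C$ with a left inverse $F_{\bar C}$ (its mirror), and both preserve all gradings. Working at the level of the full complex, this should show that $\CFK(J)$ is a graded direct summand of $\CFK(K_{p,q})$ up to homotopy equivalence. In the immersed-curve language, the curve $\gamma_J$ is then a union of components of $\gamma_{K_{p,q}}$, up to local systems and homotopy.
\item[(b)] $J$ and $K_{p,q}$ are concordant. Hence their distinguished essential components (the ones determined by the $\varepsilon$-equivalence class) agree, as do $\tau$, $\varepsilon$, and the Levine--Tristram signatures.
\end{enumerate}
I would also record the consequences $g(J)\le g(K_{p,q})$ and that $\Delta_{J}$ divides $\Delta_{K_{p,q}}$ with cofactor of the form $f(t)f(t^{-1})$ (Gilmer). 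These serve as cheap first constraints.

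Next I would use the Hanselman--Watson cabling procedure to describe $\gamma_{K_{r,s}}$ and $\gamma_{K_{p,q}}$ explicitly in terms of $\gamma_K$. Take $r$ copies of $\gamma_K$, stacked and stretched vertically by a factor of $r$, shear them by an amount determined by $s$, and pull tight. The key numerical quantity I would extract is a height-type invariant of a curve. Concretely, I would consider the vertical extent, or the minimum height at which the pulled-tight curve meets the lifts of the marked points. For a cable this quantity should scale in a controlled way with $r$ and depend on $s$ through the shear, roughly linearly, with the genus formula $g(K_{r,s})=r\,g(K)+(r-1)(|s|-1)/2$ as a consistency check.

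The step of comparing sub-collections is monotone: if $\gamma_J\subseteq\gamma_{K_{p,q}}$, every component of $\gamma_J$ is a component of the cable curve of $K_{p,q}$. Its height and slope data must therefore be among those realized there. I would first compare the distinguished essential components, which must coincide by (b). The essential component of a cable is obtained from that of $K$ by the same stretch-and-shear. Matching its winding behaviour around the marked points, essentially the slopes and the number of lifts it passes, should force $r=p$. With $r=p$, matching $\tau$ (via Hom's cabling formula, which depends on $q$ once $\varepsilon(K)$ is fixed) or directly matching the shear forces $s=q$.

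The main obstacle is the case where $\varepsilon(K)=0$ or $\gamma_K$ has a nearly trivial essential component, e.g.\ $\Delta_K=1$. There the essential component of a cable looks like that of a torus knot, and the concordance invariants in (b) see only $T_{r,s}$ versus $T_{p,q}$. In that case I would instead use (a) on the non-essential components. These carry the copies of the nontrivial part of $\gamma_K$, and their heights are scaled by $r$ versus $p$. So I would compare the minimum height among components of $\gamma_J$ with the set of heights occurring in $\gamma_{K_{p,q}}$. If that fails, I would fall back on the genus inequality together with the Alexander polynomial divisibility. Substituting $\Delta_{K_{r,s}}(t)=\Delta_K(t^r)\Delta_{T_{r,s}}(t)$, and noting that the cyclotomic factors of $\Delta_{T_{r,s}}$ are the $\Phi_d$ with $d\mid rs$, $d\nmid r$, $d\nmid s$, these two inputs should pin down $r$ and $s$ when $\Delta_K\neq1$.
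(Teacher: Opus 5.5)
\textbf{There is a genuine gap.} You assemble the right tools: Zemke's summand property, the Hanselman--Watson cabling of inessential components, and genus monotonicity. However, the two steps that actually determine the cable parameters are only asserted, and neither goes through as stated.

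\textbf{First gap: forcing $r=p$.} Your claim that matching essential components ``should force $r=p$'' is unproved, and the numerical data you propose to read off $\gamma_0$ do not suffice. When $\gamma(K)=\gamma_0(K)$ (e.g.\ $K=T_{2,3}$) there are no inessential components to exploit, and Zemke's injection only gives $\gamma(J)=\gamma(K_{p,q})$. Yet $J=K_{3,2}$ and $K_{2,5}$ have the same $h_0$ (namely $8$), the same genus ($4$), the same $\varepsilon$ ($=1$) and the same $\tau$ ($4$). So you would need a real rigidity theorem for the shape of cabled essential curves. Your Alexander-polynomial fallback is likewise only asserted.

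The paper avoids this altogether. The Levine--Tristram signatures you list in (b) are the key tool, not a side remark. Via Litherland's formula, the integral of the signature function and its Fourier coefficients (computed from Borodzik--Oleszkiewicz and Borodzik) give the reduction. Concordant cables $K_{r,s}$ and $K_{p,q}$ with $r,p>1$ satisfy either $|s|=|q|=1$, or $\operatorname{sgn}(s)=\operatorname{sgn}(q)$ and $\{r,|s|\}=\{p,|q|\}$. Also, $K$ concordant to $K_{p,q}$ forces $|q|=1$. Only after this reduction are $\mh$, genus and $h_0$ used, to rule out the swapped case and $r\neq p$ when $|q|=|s|=1$.

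Your inessential-height idea does work when $\mh(K)\neq 0$. Comparing both the smallest and the largest inessential heights, $r(h-1)+1$ against $p(h-1)+1$, gives $r=p$. But it says nothing when $\mh(K)=0$, which is exactly the situation above.

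\textbf{Second gap: $K_{p,1}$ versus $K_{p,-1}$.} You never treat $J=K_{p,-1}$ against the target $K_{p,1}$ (or vice versa) when $\varepsilon(K)=0$. There every invariant you name coincides:
\begin{enumerate}[label=(\roman*)]
\item $\gamma_0$ is the horizontal line for both;
\item $\tau=\varepsilon=0$;
\item $g=p\,g(K)$;
\item $\Delta=\Delta_K(t^p)$;
\item by Litherland, the Levine--Tristram signature function is $\sigma_K(\omega^p)$ for both;
\item $\mh$ and $h_0$ agree.
\end{enumerate}
So neither ``matching $\tau$'', nor ``matching the shear'', nor the genus/Alexander fallback can tell them apart. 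You need the Maslov grading, which your step (a) keeps in principle but never computes.

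The paper does this as follows. Cheng--Hedden--Sarkar give $\HFKhat_m(K_{p,1},pg)\cong\HFKhat_m(K,g)$. Combining this with $K_{p,-1}=-\bigl((-K)_{p,1}\bigr)$ and the Alexander and mirror symmetries shows that the top-Alexander-grading groups of $K_{p,1}$ and $K_{p,-1}$ differ by the nonzero Maslov shift $2(p-1)g$. Hence no grading-preserving injection exists in either direction.
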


We remark that, in the statement of \Cref{thm:ribboncable}, we impose no condition on the cabling parameters of $J$; in particular, $J$ may be the trivial cable $K$ itself. Thus \Cref{thm:ribboncable} implies, in particular, that the companion knot $K$ is not ribbon concordant to any nontrivial cable $K_{p,q}$. We also remark that, when $K$ is nontrivial, some of the most interesting consequences of \Cref{thm:ribboncable} are that $K_{p,1}$ is ribbon concordant to $K_{p',1}$ if and only if $p=p'$, and that $K_{q,p}$ is not ribbon concordant to $K_{p,q}$. For instance, if $K$ is slice, then all of the knots $K_{p,1}$ are concordant to one another, since they are all slice. Similarly, if $K$ is slice, then $K_{p,q}$ and $K_{q,p}$ are both concordant to the torus knot $T_{p,q}=T_{q,p}$. Thus \Cref{thm:ribboncable} detects distinctions among cables that are invisible to ordinary concordance.

Next, we turn to concrete examples satisfying \Cref{conj:cabling}. Many known cases of the conjecture follow from a stronger phenomenon: the target cable is minimal in the ribbon-concordance order\footnote{For instance, if $K$ is a $\gamma_0$-sharp fibered knot in the sense of Hom--Park, a class that includes tight fibered knots, then every cable $K_{p,q}$ is minimal in the ribbon-concordance order by~\cite{HomPark}; see also~\cite{Baker}. There are many other examples as well; for instance, the $(n,1)$-cables of the figure-eight knot are minimal~\cite{Gordon-ribbon,silver,miyazaki}.}. The family below is complementary: in these examples, \Cref{conj:cabling} is verified directly rather than as a formal consequence of minimality. 

Let $\|\cdot\|$ denote simplicial volume. By \cite[Corollary~4.2]{Gordon-satellite}, the knots $K$ with $\|S^3\smallsetminus K\|=0$ are precisely those in the smallest class containing the unknot and closed under connected sum and cabling; in particular, every such knot is fibered. We refer to $\|S^3\smallsetminus K\|$ as the \emph{simplicial volume} of $K$. The following theorem first establishes a knot Floer-theoretic primeness result: every nontrivial ribbon-concordance predecessor of a cable of a fibered knot is prime. When the companion has vanishing simplicial volume, Agol--Ren's monotonicity theorem \cite[Theorem~1.4]{AgolRen} and Gordon's characterization then imply that the predecessor is itself a cable.

\begin{theorem}\label{thm:fiberedprime}
Let $K$ be a fibered knot, and let $p>1$. Suppose that $J$ is a nontrivial knot with
\[
        J\leq K_{p,q}.
\]
Then $J$ is prime. Furthermore, if $\|S^3 \smallsetminus K\|=0$, then $J$ is
a cable of some knot.
\end{theorem}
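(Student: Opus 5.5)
Fibered knot of... The plan is to use the minimum-height invariant $\mh$ built from the immersed-curve form of $\HFKm$, with two properties assumed to be established earlier in the paper.

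\emph{Ribbon monotonicity.} If $J\le K'$, then Zemke's result gives $\HFKm(J)$ as a direct summand of $\HFKm(K')$. Consequently $\mh(J)\ge \mh(K')$, or, depending on the normalization, $\mh$ of $J$ is controlled by $\mh$ of $K'$. In addition $g(J)\le g(K')$.

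\emph{Additivity.} Under connected sum, $\mh(J_1\# J_2)=\mh(J_1)+\mh(J_2)$, which follows from the Künneth formula for $\CFK$. Moreover $\mh$ of any nontrivial knot is positive; equivalently, the only knot with $\mh=0$ is the unknot.

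Assuming these, I would first compute $\mh(K_{p,q})$ when $K$ is fibered. Fiberedness gives $\HFKhat(K,g(K))\cong\F$, so the immersed curve of $K$ has a distinguished top-height segment. Applying Hanselman--Watson's cabling operation on immersed curves, I would show that in $K_{p,q}$ the minimum height is realized by a single generator in the top Alexander grading, and that $\mh(K_{p,q})$ is as small as possible relative to the genus. Concretely, I would aim for
\[
\mh(K_{p,q})\le \text{(contribution from one prime summand)},
\]
for instance $\mh(K_{p,q})=1$ in a suitable normalization. The mechanism is that the top Alexander grading of $K_{p,q}$ is one-dimensional, the $U$-tower reaches it with height $1$, and the torsion height is controlled.

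Now suppose $J=J_1\#J_2$ with both summands nontrivial. Ribbon monotonicity gives $\HFKm(J)\hookrightarrow \HFKm(K_{p,q})$ as a summand preserving gradings. By the Künneth formula, $\HFKhat(J)$ in the top Alexander grading is the tensor product of the top gradings of $J_1$ and $J_2$. Genus detection and fiberedness detection then propagate: if the top group of $J$ must match a rank-one group, then each $J_i$ is fibered, and the minimum height adds. Thus
\[
\mh(J)\ge \mh(J_1)+\mh(J_2)\ge 2,
\]
which contradicts $\mh(J)\le\mh(K_{p,q})=1$. One subtlety is the ribbon case where $g(J)<g(K_{p,q})$. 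There I would instead use that the summand inclusion respects the tower structure, so the minimal height of $J$ is bounded by that of the cable at every Alexander grading.

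For the second statement, assume $\|S^3\smallsetminus K\|=0$. Then $\|S^3\smallsetminus K_{p,q}\|=0$, since cabling preserves zero simplicial volume. Agol--Ren monotonicity, $\|S^3\smallsetminus J\|\le \|S^3\smallsetminus K_{p,q}\|$, forces $\|S^3\smallsetminus J\|=0$. By Gordon's characterization, $J$ lies in the class generated from the unknot by connected sums and cables. Since $J$ is nontrivial and prime, it is not a connected sum and not the unknot, so it is a cable of some knot.

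The main obstacle is the first step. Computing $\mh(K_{p,q})$ precisely from the immersed curves of a fibered $K$ requires care with the cabling operation near the top Alexander grading, and it requires showing that this value is strictly less than what any connected sum of two nontrivial knots allows. I would first try the case $q$ large, where the curve is explicit, and then use the general Hanselman--Watson pegboard description.
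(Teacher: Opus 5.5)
\textbf{There is a genuine gap.} The first part of your argument rests on properties of $\mh$ that are false, and its final inequality runs the wrong way.

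\emph{The assumed properties fail.} The invariant $\mh$ only sees homologically inessential (closed) components of $\gamma$, so it vanishes on many nontrivial knots. For example, $\gamma(T_{2,3})=\gamma_0(T_{2,3})$, so $\mh(T_{2,3})=0$. It is also not additive. $\CFK(T_{2,3}\#T_{2,3})$ splits off a $1\times1$ box, so $\mh(T_{2,3}\#T_{2,3})=2\neq 0+0$. Similarly, $\mh(4_1)=\mh(4_1\#4_1)=2$.

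\emph{The target value is wrong.} $\mh(K_{p,q})=1$ is impossible, since a nonzero $\mh$ is at least $2$. In fact cables have \emph{large} $\mh$: by the Hanselman--Watson cabling formula, either $\mh(K_{p,q})=0$ or $\mh(K_{p,q})=p\,\mh(K)-p+1\geq p+1\geq 3$.

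\emph{The inequality is reversed.} The monotonicity coming from $\gamma(J)\subset\gamma(K_{p,q})$ is $\mh(J)\geq\mh(K_{p,q})$ when $\mh(J)\neq0$. That is what you state, yet your contradiction uses $\mh(J)\leq\mh(K_{p,q})$. With the correct direction, a composite $J$ with large $\mh$ contradicts nothing.

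The argument has to run the other way: show that a composite predecessor has \emph{small} $\mh$. Three steps are missing.
(i) $J$ must be fibered. This does not follow from comparing top Alexander gradings, which breaks down exactly when $g(J)<g(K_{p,q})$, as you noticed. It follows from Miyazaki's theorem that ribbon predecessors of fibered knots are fibered, applied to $K_{p,q}$, which is fibered by Stallings. Gabai then makes $J_1$ and $J_2$ fibered.
(ii) A fibered knot has an $\F$-summand in $\HFKm$, by the proof of Baldwin--Vela-Vick. If $\HFKm(J_1)$ and $\HFKm(J_2)$ both have $\F$-summands, pair $\partial_U x_1=Ux_2$ with $\partial_V y_1=Vy_2$. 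This builds an explicit $1\times1$ box around $x_1y_1$, which splits off $\CFK(J_1\#J_2)$ after a change of basis, so $\mh(J)=2$.
(iii) Since $\gamma(J)\subset\gamma(K_{p,q})$, the cable has an inessential component. Inessential components of a cable come only from those of the companion, so $K$ has one too. Hence $\mh(J)\geq\mh(K_{p,q})\geq 3$, contradicting $\mh(J)=2$.

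Your second part is correct and is the paper's argument: zero simplicial volume via Agol--Ren and Gordon's characterization, with primeness excluding connected sums.
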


The primeness conclusion in \Cref{thm:fiberedprime} depends essentially on the target being a cable knot; fiberedness of the target alone is not enough. By Miyazaki~\cite{Miyazaki:1998}, if a knot $K$ is obtained as a band sum of a split link $J_1\sqcup J_2$, then $J_1\#J_2\leq K$. Thus, a fibered knot obtained as a nontrivial band sum of two nontrivial knots may have a composite predecessor. For example, one may take $K=10_{99}$, with $J_1$ and $J_2$ the right- and left-handed trefoils, respectively.

With additional hypotheses, we can say even more: we can specify the cabling parameters, thereby verifying \Cref{conj:cabling}. In particular, the following corollary shows that, if $K$ is a slice knot with vanishing simplicial volume, then for any fixed $p>1$, \Cref{conj:cabling} holds for all sufficiently large $q$.

\begin{corollary}\label{cor:pqcable}
Let $K$ be a slice knot with $\|S^3 \smallsetminus K\|=0$, and let $p,q>1$ be relatively prime integers. Assume that either $q/p>g(K)$ or $\Delta_K(t^p)$ has no nonconstant divisor lying in $\Z[t^q]$. Suppose that $J$ is a nontrivial knot with
\[
        J \leq K_{p,q}.
\]
Then $J$ is the $(p,q)$-cable of some knot.
\end{corollary}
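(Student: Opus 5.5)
By \Cref{thm:fiberedprime}, applied with the fibered companion $K$ (fibered because $\|S^3\smallsetminus K\|=0$), the knot $J$ is a cable of some knot: $J=L_{r,s}$ with $r>1$ and $L$ nontrivial or not. The remaining work is to pin down $(r,s)=(p,q)$. I would do this with invariants that are monotone or preserved under ribbon concordance and that see the cabling parameters.

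\textbf{Step 1: Alexander polynomial divisibility.} A ribbon concordance $J\leq K_{p,q}$ gives the Gilmer-type divisibility $\Delta_J(t)\mid\Delta_{K_{p,q}}(t)$. Cabling formulas give
\[
\Delta_J(t)=\Delta_L(t^r)\,\Delta_{T_{r,s}}(t), \qquad \Delta_{K_{p,q}}(t)=\Delta_K(t^p)\,\Delta_{T_{p,q}}(t).
\]

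\textbf{Step 2: concordance and genus constraints.} Since $J$ is concordant to $K_{p,q}$, which is concordant to $T_{p,q}$ because $K$ is slice, invariants such as $\tau$, the Levine--Tristram signatures and the concordance-invariant part of $\CFK$ agree with those of $T_{p,q}$. Also $g(J)\leq g(K_{p,q})$ by the genus bound for ribbon concordance (Zemke/\cite{Zemke-ribbon}). Since $T_{p,q}$ is an L-space knot with $\tau=g$, and $g(J)\ge\tau(J)=g(T_{p,q})$, I expect to get $g(J)\ge g(T_{p,q})$. Combining this with the genus bound and the formula $g(L_{r,s})=r\,g(L)+g(T_{r,s})$, I would try to show that the torus-knot factor of $J$ must account for the roots of $\Delta_{T_{p,q}}$.

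\textbf{Step 3: identify the cyclotomic factors.} The roots of $\Delta_{T_{r,s}}$ are the $rs$-th roots of unity that are neither $r$-th nor $s$-th roots. These must divide $\Delta_K(t^p)\Delta_{T_{p,q}}(t)$. The two hypotheses are used here to rule out that $\Delta_{T_{r,s}}$ absorbs factors of $\Delta_K(t^p)$.

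Under the hypothesis that $\Delta_K(t^p)$ has no nonconstant divisor in $\Z[t^q]$, I would argue as follows. Suppose $\{r,s\}$ differs from $\{p,q\}$. Then the primitive $rs$-th cyclotomic factors must come from $\Delta_K(t^p)$. Using the $(r,s)$ structure together with the matching $\tau$ and genus, I expect to force $s=q$ and a divisor in $\Z[t^q]$, which is a contradiction.

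Under the hypothesis $q/p>g(K)$, I would instead use degree counting. We have $\deg\Delta_K(t^p)\le 2pg(K)<2q$, which is too small to contain the cyclotomic factors of order $rs$ unless those factors come from $T_{p,q}$. This gives $\Delta_{T_{r,s}}=\Delta_{T_{p,q}}$, hence $\{r,s\}=\{p,q\}$.

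\textbf{Step 4: distinguish $(r,s)$ from $(s,r)$.} To exclude $J=L_{q,p}$, I would use the minimum-height invariant from the immersed curves. Alternatively, I would adapt the argument of \Cref{thm:ribboncable}, since the winding number is detected via the genus equation $r\,g(L)+g(T_{r,s})\le p\,g(K)+g(T_{p,q})$ together with the divisibility.

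I expect Step 3 to be the main obstacle, namely controlling how cyclotomic factors can be distributed between $\Delta_L(t^r)$ and $\Delta_K(t^p)$. This is exactly where the two alternative hypotheses of the corollary are needed.
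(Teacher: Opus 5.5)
Your first step, using Theorem~\ref{thm:fiberedprime} to write $J=L_{r,s}$ with $r>1$, matches the paper. Step~3 has a genuine gap: Gilmer divisibility, genus, $\tau$ and the concordance-invariant part of $\CFK$ cannot determine $\{r,|s|\}$, whichever of the corollary's hypotheses you add. Here is a counterexample to that step. Take $K=T_{2,5}\#-T_{2,5}$, which is ribbon and has zero simplicial volume, and take $(p,q)=(3,4)$. Then $\Delta_K(t^3)=\Phi_{10}(t)^2\Phi_{30}(t)^2$ has no nonconstant divisor in $\Z[t^4]$; this is an instance of Corollary~\ref{cor:Tab}. Now let $J=(T_{2,3})_{2,3}$. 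It is an L-space knot with $\Delta_J=\Phi_6\Phi_{12}=\Delta_{T_{3,4}}$, so $\CFK(J)\cong\CFK(T_{3,4})$. Consequently:
\begin{itemize}
\item $\Delta_J\mid\Delta_{K_{3,4}}$;
\item $g(J)=3\le g(K_{3,4})=15$;
\item $\tau(J)=3$;
\item $\gamma(J)=\gamma_0(K_{3,4})$;
\item $\mh(J)=0$.
\end{itemize}
So a $(2,3)$-cable passes every constraint your Steps~1--3 actually use. Your specific claims also fail. It is false that the order-$rs$ cyclotomic factors must come from $\Delta_K(t^p)$ when $\{r,s\}\neq\{p,q\}$, since $\Phi_6\mid\Delta_{T_{3,4}}$. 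In case (a), degree counting could at best give $\Delta_{T_{r,s}}\mid\Delta_{T_{p,q}}$, not equality.

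The missing idea is the paper's Proposition~\ref{prop:slice-full-LT}. Since $K$ is slice, $J$ is concordant to $T_{p,q}$. Comparing the Fourier coefficients $\widehat I_m$ of Levine--Tristram signature functions (Litherland's cabling formula together with Borodzik's torus-knot computation) then yields $\operatorname{sgn}(s)=\operatorname{sgn}(q)$ and $\{r,|s|\}=\{p,q\}$. This uses sliceness alone; neither extra hypothesis is needed. You note that the signatures agree, but you never extract this conclusion from them.

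As a result, the corollary's hypotheses sit in the wrong step, and Step~4 aims at a false target. You cannot exclude $J=L_{q,p}$ outright. When $L$ is the unknot, $J=T_{q,p}=T_{p,q}$, and this really occurs: cabling a ribbon concordance gives $T_{p,q}\leq K_{p,q}$ whenever $K$ is ribbon. What must be shown is that $L$ is trivial, and this is exactly where the two hypotheses are used.

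Your proposed tools do not achieve this. The argument of Theorem~\ref{thm:ribboncable} compares two cables of the same companion, but here the companions $L$ and $K$ differ. Lemma~\ref{lem:essential-ribbon} with Proposition~\ref{prop:essentialheightcable} only gives $q\,h_0(L)=p\,h_0(K)=0$. Proposition~\ref{prop:minheightobstruction} only gives inequalities such as $q(\mh(L)-1)\geq p(\mh(K)-1)$, which are not contradictory.

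The paper handles the two hypotheses as follows:
\begin{itemize}
\item Under $q/p>g(K)$: your own genus inequality, applied with $(r,s)=(q,p)$, gives $q\,g(L)\le p\,g(K)<q$, so $g(L)=0$.
\item Under the divisor hypothesis: Gilmer's divisibility cancels $\Delta_{T_{p,q}}$ to give $\Delta_L(t^q)\mid\Delta_K(t^p)$, so $\Delta_L=1$. Agol--Ren then gives $\|S^3\smallsetminus L\|=0$. By Gordon's characterization $L$ is fibered, so $\Delta_L=1$ forces $L$ to be the unknot.
\end{itemize}
This final step is entirely absent from your proposal.
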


Here we give concrete examples satisfying the Alexander-polynomial hypothesis in \Cref{cor:pqcable}; see \Cref{rmk:moreexamples} for further examples. For a knot $K$ and a positive integer $n$, let $nK$ denote the $n$-fold connected sum of $K$ with itself, and let $-K$ denote the reverse of the mirror image of $K$. We write $T_{r,s}$ for the $(r,s)$-torus knot. Note that $n(T_{r,s}\# -T_{r,s})$ is ribbon; hence, in particular, its nontrivial cables are not minimal.

\begin{corollary}\label{cor:Tab}
Let $r$ and $s$ be distinct primes, and let $p,q>1$. Suppose that $J$ is a nontrivial knot with
\[
        J \leq \bigl(n(T_{r,s} \# -T_{r,s})\bigr)_{p,q}.
\]
Then $J$ is the $(p,q)$-cable of some knot.
\end{corollary}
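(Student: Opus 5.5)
The plan is to deduce this from \Cref{cor:pqcable} applied with $K = n(T_{r,s}\# -T_{r,s})$. Three hypotheses need checking: $K$ is slice, $\|S^3\smallsetminus K\|=0$, and one of the two alternative conditions on $(p,q)$ holds. Sliceness is immediate, since $T_{r,s}\#-T_{r,s}$ is ribbon and connected sums of ribbon knots are ribbon. For the simplicial volume, $K$ is built from the unknot by cabling (torus knots are cables of the unknot) and connected sums. By Gordon's characterization \cite[Corollary~4.2]{Gordon-satellite}, quoted before \Cref{thm:fiberedprime}, this gives $\|S^3\smallsetminus K\|=0$. I also need $p$ and $q$ relatively prime for a genuine cable; this is implicit in the notation $K_{p,q}$, so I will assume it as in \Cref{cor:pqcable}.

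The substantive step is the Alexander-polynomial hypothesis: $\Delta_K(t^p)$ has no nonconstant divisor in $\Z[t^q]$. We have
\[
\Delta_K(t) = \Delta_{T_{r,s}}(t)^{2n} = \left(\frac{(t^{rs}-1)(t-1)}{(t^r-1)(t^s-1)}\right)^{2n},
\]
and $\Delta_{T_{r,s}}(t)=\Phi_{rs}(t)$ because $r,s$ are distinct primes. Hence $\Delta_K(t^p)=\Phi_{rs}(t^p)^{2n}$. Every irreducible factor over $\Q$ of this polynomial is a cyclotomic polynomial $\Phi_m$ with $m \mid rsp$ and $rs \mid m$: the roots of $\Phi_{rs}(t^p)$ are the $\zeta$ with $\zeta^p$ a primitive $rs$-th root of unity.

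Suppose $f(t^q)$ is a nonconstant divisor. Then a root $\zeta$ of $f(t^q)$ has a primitive order $m$ as above. The polynomial $f(t^q)$ is invariant under $t\mapsto \omega t$ for $\omega$ a primitive $q$-th root of unity, so $\omega\zeta$ is also a root, and hence so is $\omega^k\zeta$ for all $k$. All these roots must have orders $m'$ with $rs\mid m'$ and $m'\mid rsp$.

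The remaining work is a short number-theoretic contradiction using $\gcd(p,q)=1$ and $q>1$. Let $\ell$ be a prime dividing $q$; then $\ell\nmid p$. Take $\omega$ of order $\ell$. If $\ell\nmid rs$, then $\omega\zeta$ has order $\ell m$ (as $\ell\nmid m$, since $m\mid rsp$), which does not divide $rsp$. If $\ell\in\{r,s\}$, say $\ell=r$, then the $r$-part of $m$ is exactly $r$ (as $r\nmid p$), and among the multiples $\omega^k\zeta$ some element has $r$-part of order $1$, since multiplying by an order-$r$ element runs through all of the $r$-primary component. That element's order is then not divisible by $rs$, again a contradiction. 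So no such divisor exists, and \Cref{cor:pqcable} applies.

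The main obstacle I expect is this cyclotomic bookkeeping, in particular the case $\ell\in\{r,s\}$. The case $\ell\nmid rs$ is routine once the factorization $\Delta_K(t^p)=\Phi_{rs}(t^p)^{2n}$ is in hand. One should also note that divisibility in $\Z[t,t^{-1}]$ reduces to the root argument above, since everything is monic up to units.
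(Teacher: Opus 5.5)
Your proposal is correct and follows the paper's proof. You reduce to Corollary~\ref{cor:pqcable} with $K=n(T_{r,s}\#-T_{r,s})$, then rule out a nonconstant divisor of $\Delta_K(t^p)=\Phi_{rs}(t^p)^{2n}$ lying in $\Z[t^q]$ by multiplying a root by roots of unity of prime order $\ell\mid q$, with the cases $\ell\notin\{r,s\}$ and $\ell\in\{r,s\}$.

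The differences are minor bookkeeping. You track the order $m$ (with $rs\mid m\mid rsp$) of a root of $\Delta_K(t^p)$ directly and use $\gcd(p,q)=1$ to get $\ell\nmid p$. The paper instead passes to $\lambda^p$ and uses $\gcd(p,q)=1$ to see that $\zeta^p$ runs over all $q$th roots of unity. You also explicitly check sliceness and vanishing simplicial volume, which the paper leaves implicit.
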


For the proofs, we define a new numerical invariant $\mh(K)$, the \emph{minimum-height invariant}, which measures the minimum height of a homologically inessential component of the immersed curves associated to $K$; see Section~\ref{sec:immersed}. To be more precise, in the immersed-curve reformulation of knot Floer homology~\cite{HRW}, a knot $K$ determines a collection $\gamma(K)$ of decorated curves in a marked torus. This collection has a distinguished homologically essential component $\gamma_0(K)$, whose height, denoted by $h_0(K)$, is a nonnegative even integer; see \Cref{rmk:heightconstraints}. The invariant $\mh(K)$ records the minimum height of a homologically \emph{inessential} component of $\gamma(K)$. We use the convention that $\mh(K)=0$ if $\gamma(K)$ has no homologically inessential component, equivalently, if $\gamma(K)=\gamma_0(K)$. We compute $\mh(K_{p,q})$ in terms of $p$ and $\mh(K)$ in \Cref{prop:minheightcable}, and prove an inequality relating $\mh(J)$ and $\mh(K)$ when $J\leq K$ in \Cref{prop:minheightobstruction}; these results may be of independent interest.

Another key ingredient is the following primeness criterion:

\begin{theorem}\label{thm:mhprime}
Suppose that $K$ is a nontrivial fibered knot with $\mh(K)\neq 2$. Then $K$ is prime.
\end{theorem}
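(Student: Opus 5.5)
We argue by contrapositive: if $K = K_1 \# K_2$ with both summands nontrivial and $K$ fibered, then $\mh(K)=2$.

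\emph{Step 1: the summands are fibered with simple $\gamma_0$.} Since $K$ is fibered, $\HFKhat(K)$ has rank one in the top Alexander grading $g(K)=g(K_1)+g(K_2)$. By the Künneth formula,
\[
\HFKhat(K)\cong\HFKhat(K_1)\otimes\HFKhat(K_2),
\]
so each $K_i$ also has rank one in its top grading. Hence each $K_i$ is fibered and nontrivial, and $g(K_i)\ge 1$.

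\emph{Step 2: pass to $\CFK^\infty$.} Work with a reduced model $C_i$ of $\CFK^\infty(K_i)$. It splits as a staircase-type summand carrying $\gamma_0(K_i)$, plus boxes and other inessential pieces. Fiberedness and nontriviality give two facts about $C_i$:
\begin{itemize}[leftmargin=*]
\item the top-grading generator $x_i$ is unique;
\item $x_i$ is joined to the rest of the complex by a length-one arrow, horizontal or vertical.
\end{itemize}
The second fact holds because the top generator cannot be isolated when $g\ge1$. On $\gamma_0(K_i)$, this says the curve meets the top Alexander level exactly once and makes a turn there.

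\emph{Step 3: find a height-two component in the tensor product.} The tensor product $C_1\otimes C_2$ contains the generator $x_1\otimes x_2$ in the top grading. It also contains neighbouring generators built from $x_1$, $x_2$ and their arrow partners $y_1,y_2$. Consider the square spanned by $x_1\otimes x_2$, $x_1\otimes y_2$, $y_1\otimes x_2$, $y_1\otimes y_2$. If both arrows are horizontal (or both vertical), this is a $1\times 1$ box and splits off as a direct summand after a change of basis. If one is horizontal and one vertical, use the symmetry $\CFK(K_i)\simeq$ its flip to replace one arrow by its partner of the other orientation, and again obtain a box. A $1\times1$ box corresponds to a figure-eight-type inessential curve of height $2$, giving $\mh(K)\le 2$. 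Since every inessential component has height at least $2$ (see \Cref{rmk:heightconstraints}), we conclude $\mh(K)=2$.

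\emph{Main obstacle.} The hard part is Step 3: showing that the $1\times1$ square genuinely splits off as a summand of the reduced complex, and is not absorbed into $\gamma_0(K)$ or a taller curve. The first approach to try is an explicit filtered change of basis in the top Alexander grading, using uniqueness of $x_1\otimes x_2$ there. A cleaner alternative is the immersed-curve tensor/pairing description: the top-level corner of the connected-sum curve forms a small closed loop enclosing a single marked point. If that fails, one can use $\HFKhat$ ranks in the top two Alexander gradings: the staircase $\gamma_0(K)$ contributes rank one at the top and rank one just below it. Any excess rank at the second level, which the Künneth formula forces whenever both $K_i$ are nontrivial, must come from inessential curves reaching the top region, and these must have height exactly $2$.
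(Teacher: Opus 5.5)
Your outline has the same architecture as the paper's proof: argue by contrapositive, note that the summands are fibered, get a length-one arrow from each, tensor two such arrows into a $1\times1$ box, and split the box off. Step~1 via K\"unneth plus fiberedness detection is a fine substitute for Gabai. But the two load-bearing steps are either unjustified or stated incorrectly.

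\textbf{The length-one arrow (Step~2).} The second bullet of Step~2 is the real input, and ``the top generator cannot be isolated'' does not give it. Non-isolation only yields \emph{some} arrow, possibly of length $n\ge2$. A horizontal arrow of length $m$ tensored with a vertical arrow of length $n$ gives an $m\times n$ box, of height $m+n$, so nothing about height $2$ follows. That a length-one arrow exists is exactly what the proof of Baldwin--Vela-Vick's theorem supplies. The paper uses it in the form ``$\HFKm$ of a fibered knot has an $\F$-summand,'' i.e.\ a basis of $\CFK(K_1)$ with $\partial_Ux_1=Ux_2$.

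\textbf{The case analysis (Step~3) is backwards.} Two parallel length-one arrows do not tensor to a box. In the model where $\partial x_i=Vy_i$ ($i=1,2$) are the only arrows, $\partial(x_1x_2)=V(y_1x_2+x_1y_2)$ and $\partial(y_1x_2)=\partial(x_1y_2)=Vy_1y_2$. After replacing $y_1x_2$ by $y_1x_2+x_1y_2$, you get two vertical arrows, not a box. For instance, in $T_{2,3}\#T_{2,3}$ both top generators receive horizontal arrows, and the square on them is not a box. A box comes from one horizontal and one vertical arrow. So your mixed case needs no flipping; it is the parallel case where you must use the symmetry of $\CFK(K_2)$ to trade its arrow for one of the other orientation. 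This is what the paper does: it takes $\partial_Ux_1=Ux_2$ in one factor and $\partial_Vy_1=Vy_2$ in the other, so the box need not sit on the top generator.

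\textbf{The splitting.} You correctly flag this as the main obstacle, but it is not carried out, and your fallback does not work. The claim that $\gamma_0(K)$ accounts for rank one at the top and next-to-top levels is false. For the fibered composite knot $4_1\#4_1$, $\gamma_0$ is a horizontal line at height $0$; the same holds for $4_1$, which also refutes your remark that $\gamma_0(K_i)$ meets the top level. Moreover, extra rank in Alexander grading $g-1$ could come from an inessential component that reaches level $g$ but is arbitrarily tall, so rank counting cannot detect height $2$.

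The paper does the splitting directly. Because $\partial x_1$ and $\partial y_1$ have other components, it first corrects the naive corners, e.g.\ $b=x_1y_2+V^{-1}(\partial_Vx_1)y_1$, $c=x_2y_1+U^{-1}x_1\partial_Uy_1$, and a four-term $d$. With these, $\partial_Va=Vb$, $\partial_Ua=Uc$, $\partial_Ub=Ud$ and $\partial_Vc=Vd$ hold on the nose. It then removes stray arrows into $d$ and $b$ by the basis changes $z\mapsto z+U^{m-1}V^nb$ and $z\mapsto z+U^mV^{n-1}a$. Finally, $\partial^2=0$ rules out stray arrows into $c$ and $a$.

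With the Baldwin--Vela-Vick input, the correct horizontal/vertical pairing, and this change of basis, your outline becomes the paper's proof. As written, it does not establish $\mh(K)=2$.
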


The proof of this criterion uses fiberedness differently from how it is usually used in ribbon concordance: the proof of \cite[Theorem~1.1]{BaldwinVV}, together with the relationship between $\HFKhat$ and $\HFKm$, shows that the minus version of knot Floer homology of a fibered knot contains an $\F$-summand. This naturally leads to the
following question:

\begin{question}\label{ques:Fsummand}
Does $\HFKm(K)$ contain an $\F$-summand for every nontrivial knot $K$?
\end{question}

An affirmative answer would allow the fiberedness hypothesis to be removed from Theorems~\ref{thm:fiberedprime} and~\ref{thm:mhprime}. We note that Question~\ref{ques:Fsummand} has an affirmative answer for every nontrivial knot in the class generated under connected sum~\cite{OS-knots} by fibered knots~\cite{BaldwinVV}, Floer-thin knots (including all alternating knots)~\cite{Petkova}, twisted Whitehead doubles~\cite{Hedden}, generalized Mazur satellites~\cite{Bodish}, and cables~\cite{HW-cables}.

Conjecture~\ref{conj:cabling} naturally leads to the following conjecture
concerning knot genus. Recall that if $J$ were the $(p,q)$-cable of a
knot $L$, then Schubert's genus formula~\cite{Schubert} would give
\[
        g(J)=p\cdot g(L)+g(T_{p,q}).
\]
Thus, if $J$ is a torus knot, then $L$ must be the unknot and
$J=T_{p,q}$. Otherwise, $L$ is nontrivial, and hence
$g(J)\geq p+g(T_{p,q})$.

\begin{conjecture}\label{conj:genus}
Let $K$ be a knot, and let $p>1$. Suppose that $J$ is a nontrivial knot
with
\[
        J\leq K_{p,q}.
\]
If $J$ is a torus knot, then $J=T_{p,q}$. Otherwise, $g(J)\geq p+g(T_{p,q})$.
\end{conjecture}

The next theorem provides evidence for \Cref{conj:genus}:

\begin{theorem}\label{thm:genus}
Let $K$ be a knot, and let $p>1$. Suppose that $J$ is a nontrivial knot
with
\[
        J\leq K_{p,q}.
\]
Then
\[
        g(J)\geq
        \left\lceil\frac{p\cdot h_0(K)}{2}\right\rceil+g(T_{p,q}).
\]
In particular, if $h_0(K)\neq 0$, then
$g(J)\geq p+g(T_{p,q})$. Moreover, if $J$ is a torus knot, then $J=T_{p,q}$.
\end{theorem}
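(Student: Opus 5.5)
The plan is to combine Zemke's summand theorem for ribbon concordances with the Hanselman--Watson cabling formula for immersed curves.

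\textbf{Step 1: transfer $\gamma_0$ along the ribbon concordance.} If $J\leq K_{p,q}$, Zemke's theorem~\cite{Zemke-ribbon} says that the concordance maps exhibit the knot Floer complex of $J$ as a (graded, filtered) direct summand of that of $K_{p,q}$. In the immersed-curve language, every component of $\gamma(J)$ is then a component of $\gamma(K_{p,q})$. Each collection has exactly one homologically essential component, so this forces
\[
  \gamma_0(J)=\gamma_0(K_{p,q}).
\]
The same conclusion also follows because $\gamma_0$ is a concordance invariant.

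\textbf{Step 2: turn the height of $\gamma_0$ into a genus bound.} The Alexander gradings realized by intersection points of $\gamma_0(J)$ with the vertical line lie in $[-g(J),g(J)]$, since $\HFKhat$ detects genus. The curve $\gamma_0(J)$ has a nontrivial vertical extent: its height, together with the heights contributed by the Alexander grading of its endpoints. Concretely, its topmost generator has Alexander grading at least half of its total vertical span. This should give an inequality of the form
\[
  2g(J)\;\geq\;\text{(vertical span of }\gamma_0(J)).
\]
I would check this against the normalization of height in \Cref{rmk:heightconstraints}; the intended form is that $2g$ bounds $h_0$ plus the span of the ``torus-knot part.''

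\textbf{Step 3: compute the vertical span of $\gamma_0(K_{p,q})$.} Here I would invoke the cabling formula for immersed curves (Hanselman--Watson~\cite{HW-cables}). Its recipe is:
\begin{enumerate}[label=(\roman*)]
\item draw $p$ copies of $\gamma(K)$, stacked and vertically scaled by $p$;
\item shear them according to $q$;
\item pull tight.
\end{enumerate}
Applied to $\gamma_0(K)$, this produces a curve whose vertical span is at least $p\cdot h_0(K)+2g(T_{p,q})$. The $p$ stacked copies each contribute a span of $h_0(K)$, and the shear contributes the staircase of $T_{p,q}$, which is exactly $\gamma_0$ in the case $K$ is unknotted. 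Combining this with Steps~1 and~2 gives
\[
  g(J)\geq \frac{p\,h_0(K)}{2}+g(T_{p,q}),
\]
and integrality gives the ceiling. If $h_0(K)\neq 0$, then $h_0(K)\geq 2$ because it is even, so the bound becomes $g(J)\geq p+g(T_{p,q})$.

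\textbf{Step 4: the torus-knot case.} Suppose $J=T_{r,s}$. Then $\gamma(J)=\gamma_0(J)$ is a staircase curve, so $\gamma_0(K_{p,q})$ is a staircase. In the cabling construction, a nonzero $h_0(K)$ yields non-staircase behavior in the lifted, sheared curve: it produces vertical segments of length exceeding one step. I would show this is incompatible with a staircase, forcing $h_0(K)=0$. In that case $\gamma_0(K)$ is that of the unknot, so $\gamma_0(K_{p,q})=\gamma_0(T_{p,q})$. This gives $\gamma_0(T_{r,s})=\gamma_0(T_{p,q})$, hence equal Alexander polynomials, and torus knots are determined by their Alexander polynomials. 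Therefore $J=T_{p,q}$.

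The main obstacle is Step~3, together with its use in Step~4. One has to verify carefully that pulling the cabled curve tight does not shorten its vertical span below $p\,h_0(K)+2g(T_{p,q})$, and that a positive $h_0(K)$ genuinely destroys the staircase shape. I would first handle the extremal generators, namely the top and bottom intersection points of $\gamma_0(K)$. I would then track where their $p$ lifts land after the shear, since no homotopy in the punctured strip can remove them.
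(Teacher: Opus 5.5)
Your Steps 1--3 follow the paper's argument for the genus bound. Concordance invariance of $\gamma_0$ gives $h_0(J)=h_0(K_{p,q})$. Proposition~\ref{prop:essentialheightcable} gives $h_0(K_{p,q})=p\,h_0(K)+(p-1)(|q|-1)=p\,h_0(K)+2g(T_{p,q})$. The inequality $h_0(J)\le 2g(J)$ then finishes it; the discussion of ``endpoints'' in Step~2 is unnecessary.

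\textbf{The gap is in Step~4.} Your claim that $h_0(K)\neq 0$ destroys the staircase shape of $\gamma_0(K_{p,q})$ is false. If $K$ is an L-space knot and $q\ge p(2g(K)-1)$, then $K_{p,q}$ is again an L-space knot. So $\gamma(K_{p,q})=\gamma_0(K_{p,q})$ is a staircase, while $h_0(K)=2g(K)>0$.

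Concretely, take $K=T_{2,3}$ and $(p,q)=(2,3)$. Both $K_{2,3}$ and $T_{3,4}$ are L-space knots with Alexander polynomial $t^{-3}-t^{-2}+1-t^{2}+t^{3}$. Hence $\gamma(T_{3,4})=\gamma(K_{2,3})$, and in fact their full complexes $\CFK$ agree. Moreover, $g(T_{3,4})=3=\lceil p\,h_0(K)/2\rceil+g(T_{p,q})$, so the genus bound holds with equality. Zemke's injection, the comparison of $\gamma_0$, and the genus bound therefore all fail to rule out $T_{3,4}\le K_{2,3}$. No obstruction that compares the knot Floer complexes of $J$ and $K_{p,q}$ can prove the torus-knot clause.

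What is missing is an ordinary-concordance invariant that sees the cabling parameters directly. The paper views $J=T_{r,s}$ (with $r>1$, $|s|>1$) as the $(r,s)$-cable of the unknot, which is slice, and applies Proposition~\ref{prop:slice-full-LT}. That proposition compares the Fourier coefficients $\widehat I_m$ of Levine--Tristram signature functions, using Litherland's cabling formula and Borodzik's computation. It yields $\operatorname{sgn}(s)=\operatorname{sgn}(q)$ and $\{r,|s|\}=\{p,|q|\}$, hence $J=T_{p,q}$. In the example above even the ordinary signatures differ: $-6$ for $T_{3,4}$ versus $-2$ for $K_{2,3}$.

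A smaller point: even in your case $h_0(K)=0$, the Alexander polynomial determines a torus knot only up to mirror image. You would need the gradings carried by $\gamma_0$, for instance $\tau$, to fix the sign of $s$.
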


When $J$ has genus one, we can say even more:

\begin{theorem}\label{thm:genus1}
Let $K$ be a knot, let $J$ be a genus-one knot, and let $p>1$.  Suppose that
\[
        J\leq K_{p,q}.
\]
Then
\[
        J=T_{p,q}
        \qquad\text{ and }\qquad
        \{p,|q|\}=\{2,3\}.
\]
In particular, $J$ is a trefoil knot, right-handed if $q>0$ and
left-handed if $q<0$.
\end{theorem}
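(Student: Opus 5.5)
We will derive \Cref{thm:genus1} from \Cref{thm:genus} by checking which parameters are compatible with $g(J)=1$. The argument has three steps.

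\textbf{Step 1: genus bound.} Since $J$ is nontrivial and $J\leq K_{p,q}$, \Cref{thm:genus} gives
\[
  1=g(J)\geq \left\lceil\frac{p\cdot h_0(K)}{2}\right\rceil+g(T_{p,q}),
\]
with both terms nonnegative. Because $p>1$, we have $g(T_{p,q})=(p-1)(|q|-1)/2$, which vanishes only when $|q|=1$. So exactly one of the following holds:
\begin{enumerate}[label=(\roman*)]
\item $g(T_{p,q})=1$ and $h_0(K)=0$;
\item $g(T_{p,q})=0$, i.e. $|q|=1$, and $\lceil p\,h_0(K)/2\rceil\leq 1$.
\end{enumerate}
In case (ii), $h_0(K)$ is a nonnegative even integer (\Cref{rmk:heightconstraints}) and $p\geq 2$. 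Hence $h_0(K)\geq 2$ would give $p\,h_0(K)/2\geq 2$, which is too large. So in case (ii) we also have $h_0(K)=0$.

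\textbf{Step 2: case (i).} The equation $(p-1)(|q|-1)=2$ with $p\geq 2$ forces $\{p,|q|\}=\{2,3\}$. It remains to show $J=T_{p,q}$. The natural route is to use the immersed-curve data: $J\leq K_{p,q}$ forces $\HFKhat(K_{p,q})$ to contain $\HFKhat(J)$ as a summand (Zemke). With $h_0(K)=0$, the curve $\gamma_0(K_{p,q})$ agrees with $\gamma_0(T_{p,q})$, the trefoil curve. The invariants of $J$ are then pinned down: $\tau(J)=\tau(K_{p,q})=\pm1$ and $\epsilon(J)=\epsilon(K_{p,q})$, where $\tau(K_{p,q})$ follows from the cabling formula. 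Combined with $g(J)=1$, this identifies $\gamma_0(J)$ with the trefoil's essential curve. It then remains to show that $J$ is fibered. I would get fiberedness from the rank of $\HFKhat(J,1)$, using the summand relation together with the fact that the top Alexander grading of $K_{p,q}$ has rank one, which holds since $h_0(K)=0$. I am unsure this works directly.

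A cleaner alternative is to invoke the last clause of \Cref{thm:genus}. Genus-one fibered knots are exactly the trefoils and the figure-eight, and the figure-eight is excluded by $\tau$ or by $\epsilon$. So if $J$ is fibered, then $J$ is a torus knot, hence $J=T_{p,q}$. The sign is right-handed for $q>0$ and left-handed for $q<0$, read off from $\tau$.

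\textbf{Step 3: case (ii).} Here $|q|=1$ and $h_0(K)=0$, so $\tau(K_{p,\pm1})=\pm\cdots$ is small. The goal is to reach a contradiction. I would exploit the cabling formula for $\mh$ in \Cref{prop:minheightcable} together with the obstruction in \Cref{prop:minheightobstruction}. A genus-one knot has all components of $\gamma(J)$ of height at most $2$, so $\mh(J)\leq 2$. On the other hand, $\mh(K_{p,\pm1})$ scales with $p$, or is $0$ if $K$'s curve has only $\gamma_0$. In the latter case $K_{p,\pm1}$ behaves like the unknot's cable, so $J\leq$ a knot whose $\HFKhat$ is that of the unknot, and Zemke's injectivity forces $J$ trivial.

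The main obstacle is Step 3 when $\gamma(K)$ has inessential components, and matching the inequality in \Cref{prop:minheightobstruction} against $\mh(J)\leq 2$. I expect the resolution to be that $\mh(K_{p,q})\geq 2p>2$, which contradicts the obstruction.
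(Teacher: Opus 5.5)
There is a genuine gap in Step~2. Your treatment of case~(i) never rules out homologically inessential components of $\gamma(J)$. Without that, the data you pin down do not determine $\HFKhat(J)$ or fiberedness. Those data are $\gamma_0(J)=\gamma_0(K_{p,q})=\gamma(T_{p,q})$ (equivalently $\tau$ and $\epsilon$) together with $g(J)=1$. For instance, the positive untwisted Whitehead double $D$ of the right-handed trefoil has genus one and, by Hedden's computation of its knot Floer complex, the same essential curve as $T_{2,3}$. Yet $\Delta_D=1$, so $D$ is not fibered; it differs from the trefoil by inessential height-two components.

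Your proposed source of fiberedness also fails. The condition $h_0(K)=0$ holds for every slice knot, so $K$ can be nontrivial and non-fibered (e.g.\ $K=6_1$). Then the top Alexander grading of $K_{p,q}$ is $p\,g(K)+1>1$, and it need not have rank one. In any case, Zemke's summand relation compares $\HFKhat(J,1)$ with $\HFKhat(K_{p,q},1)$. That group receives contributions from the $p$ copies of every inessential component of $\gamma(K)$, so it can be large.

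The missing step is exactly the $\mh$ argument you sketch in Step~3, but it must be run first and in all cases.
\begin{enumerate}
\item If $\mh(J)\neq0$, then $\mh(J)=2$ by \Cref{lem:mhgenus}.
\item By Zemke, $\gamma(J)\subset\gamma(K_{p,q})$. The cabling formula produces inessential components of $\gamma(K_{p,q})$ only from inessential components of $\gamma(K)$, so $\mh(K)\geq2$.
\item Hence $\mh(K_{p,q})=p\,\mh(K)-p+1\geq p+1\geq3$ (not $2p$), contradicting \Cref{prop:minheightobstruction}.
\end{enumerate}
Therefore $\gamma(J)=\gamma_0(J)$ and $h_0(J)=2g(J)=2$.

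This disposes of case~(ii) at once, since there $h_0(K_{p,\pm1})=0$. Your Step~3 needs this subcase anyway, because \Cref{prop:minheightobstruction} says nothing when $\mh(J)=0$.

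In case~(i), it gives $\gamma(J)=\gamma_0(J)=\gamma(T_{p,q})$, so $\HFKhat(J)\cong\HFKhat(T_{p,q})$. Ghiggini's trefoil detection then yields $J=T_{p,q}$ directly, with no need for fiberedness or for the torus-knot clause of \Cref{thm:genus}.

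This is the paper's proof. There, the preliminary reduction through \Cref{thm:genus} is unnecessary: $h_0(K_{p,q})=h_0(J)=2$ already forces $h_0(K)=0$ and $(p-1)(|q|-1)=2$.
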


These results are proved in Section~\ref{sec:fixed-cables-genus}. The genus bound follows from concordance invariance and the cabling formula for the height of the essential component. In the genus one case, the minimum-height obstruction eliminates all inessential components, after which the essential-height formula determines the cabling parameters and trefoil
detection identifies $J$.

\subsection*{Notation and conventions}
For a $(p,q)$-cable, we assume throughout that $p>0$. For a positive integer
$n$, we write $nK$ for the $n$-fold connected sum of $K$ with itself and
$-K$ for the reverse of the mirror image of $K$. Alexander polynomials are
normalized to have nonzero constant term.

\subsection*{Organization}

In Section~\ref{sec:immersed}, we recall the immersed-curve perspective on
knot Floer homology, define the height invariants used throughout the paper,
and establish their behavior under ribbon concordance and cabling.  In
Section~\ref{sec:knot-floer-complexes}, we record the algebraic knot Floer
input needed for the later arguments.  Section~\ref{sec:cable-detection}
proves the primeness and cable-detection results, including
Theorems~\ref{thm:fiberedprime} and~\ref{thm:mhprime}, and Corollaries~\ref{cor:pqcable}
and~\ref{cor:Tab}.  Section~\ref{sec:fixed-cables-genus} proves the results
on cables of a fixed knot and the genus bound, namely
Theorems~\ref{thm:ribboncable}, ~\ref{thm:genus}, and~\ref{thm:genus1}.  Finally,
Appendix~\ref{app:examples} records the ordinary concordance facts about
cables that are used in the proofs.

\subsection*{Acknowledgements}
The authors thank John Baldwin and Tye Lidman for helpful conversations. JH was partially supported by NSF grant DMS-2506400 and Georgia Tech's Elaine M. Hubbard Faculty Fellowship. JP was partially supported by the Samsung Science and Technology Foundation (SSTF-BA2102-02) and NRF grant RS-2025-00542968.

The authors used ChatGPT to help expand the examples in \Cref{cor:Tab}, specifically in the portion of the proof dealing with the cases $\ell=r$ and $\ell=s$. ChatGPT was also used to check the computation leading to the formula for $\widehat I_m(T_{p,q})$ in Appendix~\ref{app:examples}, which is derived by taking a limit of Borodzik's formula in~\cite[Section~3]{Borodzik-rho-iterated}. The authors verified all arguments and computations.

\section{Height invariants from immersed curves}\label{sec:immersed}

By \cite[Theorem 1.1]{HRW}, we have a knot invariant $\gamma(K)$ consisting of a collection of decorated curves $\gamma(K)=\{ \gamma_i \}$ in a marked torus. It is convenient to picture these curves in the plane, where we lift the marked point to the half-integer lattice (that is, lattice points of the form $n.5$ where $n$ is an integer). We think of the half-integer lattice as pegs, across which our immersed curves cannot pass. It follows from \cite{HRW} that when $K$ is a knot in $S^3$, there is always a homologically essential component of $\gamma(K)$, which we denote by $\gamma_0(K)$.

The \emph{height} of a component $\gamma_i$ is the vertical distance between the highest point and lowest point of $\gamma_i$, which we take to be integers. Recall that we can recover knot Floer homology from the immersed curve invariant by intersecting the immersed curve with a vertical line. If we take the intersection points to occur at integer heights, then the height corresponds to the Alexander grading.

We will be interested in the \emph{minimum height} of a homologically inessential component of $\gamma(K)$, namely,
\[
\mh(K) := \min \{ \hgt(\gamma_i) \mid i > 0 \},
\]
with the convention that $\mh(K)=0$ if $\gamma(K)=\gamma_0(K)$.
We also write
\[
        h_0(K)=\hgt(\gamma_0(K))
\]
for the height of the homologically essential component. Note that $h_0(K) \leq 2g(K)$ and that if $\gamma(K) = \gamma_0(K)$, then $h_0(K)=2g(K)$.

\begin{figure}[ht]
\centering
\labellist
\endlabellist
\includegraphics[scale=1]{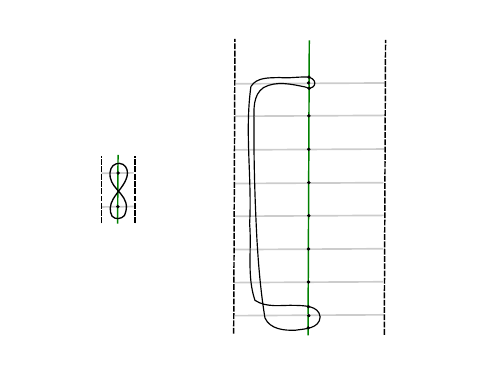}
\caption{Left, an immersed curve $\gamma_i$ with height 2. Right, an immersed curve of height 8.}
\label{fig:mh}
\end{figure}

\begin{remark}\label{rmk:heightconstraints}
By the symmetry of knot Floer homology, $h_0(K)$ is a nonnegative even integer. Moreover, if $\mh(K)\neq0$, then $\mh(K)\geq2$.
\end{remark}

We have that $\mh(K)$ gives a lower bound on $g(K)$:
\begin{lemma}\label{lem:mhgenus}
The invariant $\mh(K)$ gives a lower bound on (twice) the genus:
	\[ g(K) \geq  \left\lceil \frac{\mh(K)}{2}  \right\rceil. \]
\end{lemma}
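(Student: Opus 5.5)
The plan is to compare the vertical extent of any single component of $\gamma(K)$ with the Alexander grading support of $\HFKhat(K)$, which is governed by the genus.

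First, if $\gamma(K)=\gamma_0(K)$, then by convention $\mh(K)=0$, and the inequality $g(K)\geq 0$ is trivial. So we may assume there is at least one homologically inessential component $\gamma_i$ with $i>0$.

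Fix such a component $\gamma_i$. Recall from the discussion above that $\HFKhat(K)$ is recovered by intersecting $\gamma(K)$ with a vertical line, and that the height of an intersection point records its Alexander grading. The intersection points of all components together give a basis for $\HFKhat(K)$ in minimal position.

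The key step is to show that $\gamma_i$ contributes intersection points at both its highest and its lowest levels. Since $\gamma_i$ is homologically inessential, it is a closed curve that, in the lift to the plane, is trapped in a neighborhood of the vertical line through the pegs, wrapping around pegs. When $\gamma_i$ is pulled tight, its topmost and bottommost points are arcs turning around pegs, and it crosses the vertical line just next to those pegs. Thus $\gamma_i$ meets the vertical line at heights $a$ and $a-\hgt(\gamma_i)$ for some $a$. Consequently $\HFKhat(K)$ is supported in an interval of Alexander gradings of length at least $\hgt(\gamma_i)$.

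Next, by the genus detection of knot Floer homology, together with its symmetry, $\HFKhat(K)$ is supported in Alexander gradings $[-g(K),g(K)]$. An interval of length $2g(K)$ must therefore contain a span of at least $\hgt(\gamma_i)$, so
\[
\hgt(\gamma_i)\leq 2g(K).
\]

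Taking the minimum over $i>0$ gives $\mh(K)\leq 2g(K)$. Since $g(K)$ is an integer, this yields $g(K)\geq \lceil \mh(K)/2\rceil$.

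The main obstacle is justifying carefully that the extremal heights of $\gamma_i$ are realized by actual intersection points with the vertical line, rather than by some part of the curve away from it. This depends on the paper's normalization of heights, namely that intersections occur at integer heights and that height is measured between the extreme intersection levels. I would handle it by appealing to the pegboard/minimal-position description in \cite{HRW}: for curves pulled tight, every local maximum or minimum of height occurs while wrapping a peg adjacent to the vertical line, so the extremes are attained at intersections with it.
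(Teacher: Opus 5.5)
Your proposal is correct and follows the same route as the paper: an inessential component of height $\mh(K)$ forces $\HFKhat(K)$ to be supported in Alexander gradings at least $\mh(K)$ apart, and genus detection (with symmetry) confines that support to $[-g(K),g(K)]$, so $\mh(K)\leq 2g(K)$. Your discussion of why the extreme heights of $\gamma_i$ occur at intersection points with the vertical line fills in a step the paper treats as immediate from its convention that heights are measured at those intersection points.
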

\begin{proof}
The result follows from the fact that knot Floer homology detects genus \cite{OS-genus}, and the fact that if $\mh(K)=n$, then the knot Floer homology of $K$ is supported in Alexander gradings at least $n$ apart.
\end{proof}

The invariant $\mh$ is useful in obstructing ribbon concordances, as illustrated by the following proposition:

\begin{proposition}\label{prop:minheightobstruction}
If $J \leq K$ and $\mh(J)\neq 0$, then $\mh(J) \geq \mh(K)$.
\end{proposition}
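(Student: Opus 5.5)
Our approach is Zemke's theorem: a ribbon concordance $C$ from $J$ to $K$ induces maps $F_C\colon \CFK(J)\to\CFK(K)$ and $F_{\bar C}\colon\CFK(K)\to\CFK(J)$, filtered, graded and $\F[U,V]$-equivariant, with $F_{\bar C}\circ F_C\simeq \mathrm{id}$. Hence $\CFK(J)$ is a homotopy retract of $\CFK(K)$, so $\CFK(K)\simeq \CFK(J)\oplus A$ for some complex $A$; this is the form used by Zemke and by Levine--Zemke. We then move this splitting into the immersed-curve setting. By \cite{HRW}, $\gamma(K)$ is determined by $\CFK(K)$ up to homotopy (as a type D structure over the torus algebra, or equivalently the bordered invariant of the complement). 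A direct sum of complexes corresponds to the disjoint union of the associated decorated curve collections. The hard part is making this precise: the summand $A$ is not a priori the complex of a knot, so we treat it as a general complex over $\F[U,V]$. We would cite the Hanselman--Rasmussen--Watson structure theorem, which says every such complex is represented by immersed curves with local systems, and which is compatible with direct sums. Since these representations are unique up to regular homotopy and the local systems decompose, we get
\[
\gamma(K)=\gamma(J)\sqcup\gamma(A).
\]

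Next we fix which components go where. The homologically essential component is unique, because $H_*$ localized at $U$ (or $V$) has rank one for a knot complex. Since $\CFK(J)$ already contributes an essential component, $\gamma(A)$ consists only of homologically inessential components, possibly with local systems. Therefore every inessential component of $\gamma(J)$ also appears, with the same height, among the inessential components of $\gamma(K)$. Before relying on this, we would check that height is intrinsic to each component, i.e.\ the span of the Alexander gradings of its generators. This holds because the Alexander grading comes from the vertical coordinate in the lift, and the splitting respects gradings.

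To finish, suppose $\mh(J)\neq0$. Then $\gamma(J)$ has an inessential component $\gamma_i$ with $\hgt(\gamma_i)=\mh(J)$. By the previous step, $\gamma_i$ is also an inessential component of $\gamma(K)$. So $\gamma(K)$ has at least one inessential component, and $\mh(K)$ is a minimum over a set containing $\mh(J)$, which gives $\mh(K)\le \mh(J)$.

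The main obstacle is the first step: turning an algebraic direct-summand statement into a disjoint union of curves. This is delicate because the summand might be realized only up to homotopy and might carry nontrivial local systems. It would be handled by the uniqueness and direct-sum compatibility of the curve representative in \cite{HRW}.
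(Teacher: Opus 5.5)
Your proposal is correct and follows the same route as the paper: Zemke's ribbon-concordance result makes $\CFK(J)$ a homotopy direct summand of $\CFK(K)$, so $\gamma(J)$ is a sub-collection of $\gamma(K)$, and comparing inessential components gives $\mh(K)\le\mh(J)$. You make explicit several points the paper leaves implicit: the splitting coming from $F_{\bar C}\circ F_C\simeq\mathrm{id}$, the fact that the complementary summand contributes no essential component, and the fact that heights are unaffected by local systems.
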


\begin{proof}
The proposition follows from Zemke's proof that knot Floer homology obstructs ribbon concordance \cite[Theorem 1.7]{Zemke-ribbon} and the definition of $\mh$. That is, if $J \leq K$, then the knot Floer complex of $J$ injects into the knot Floer complex of $K$. In terms of immersed curves, this means that $\gamma(J)$ is a subset of $\gamma(K)$. We consider $\gamma_{>0}$, that is, we ignore the homologically essential component of $\gamma$. It follows that the minimum height component of $\gamma_{>0}(K)$ must be less than or equal to the minimum height component of $\gamma_{>0}(J)$, as desired.
\end{proof}

Recall that $\gamma_0$ is invariant under ordinary concordance~\cite[Proposition~2]{HW-cables} (see also \cite[Theorem~1]{Hom-conc}); hence, we immediately have the following.

\begin{lemma}\label{lem:essential-ribbon}
If $J\leq K$, then $h_0(J)=h_0(K)$. \qed
\end{lemma}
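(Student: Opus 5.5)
The statement is immediate once we record two facts: the height $h_0$ is determined by the essential component $\gamma_0$, and $\gamma_0$ is an ordinary concordance invariant. The plan is to unwind these in order.

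First, observe that a ribbon concordance from $J$ to $K$ is in particular a smooth concordance. Reversing and mirroring as needed, the annulus $C\subset S^3\times[0,1]$ with $\partial C=-J\times\{0\}\cup K\times\{1\}$ exhibits $J$ and $K$ as concordant. The absence of local maxima plays no role in this step.

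Next, invoke \cite[Proposition~2]{HW-cables}, or equivalently the algebraic statement of \cite[Theorem~1]{Hom-conc}. Concordant knots have knot Floer complexes that are stably equivalent: $\CFK(J)\oplus A\simeq \CFK(K)\oplus B$ with $A,B$ acyclic after localization. The essential component $\gamma_0$ is exactly the summand of the immersed curve that survives this stabilization. Under the correspondence of \cite{HRW}, acyclic summands correspond to homologically inessential curves, so the curve sets satisfy $\gamma_0(J)=\gamma_0(K)$ up to homotopy in the punctured torus. Any additional local systems are attached to inessential components and do not affect $\gamma_0$.

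Finally, the height of a curve is computed from its pegboard representative: it is the vertical distance between its extreme points, equivalently the spread of Alexander gradings of the generators it contributes when paired with a vertical line. This is well defined for the homotopy class in the marked torus, once the curve is pulled tight relative to the pegs. Hence equal curves have equal heights, and
\[
h_0(J)=\hgt(\gamma_0(J))=\hgt(\gamma_0(K))=h_0(K).
\]

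The only point requiring care is the claim that height is an invariant of the homotopy class rather than of a chosen representative. I would handle this by always using the minimal-intersection, pulled-tight representative that \cite{HRW} uses to read off $\HFKhat$. Its height equals the spread of Alexander gradings of the corresponding summand, which is visibly determined by the chain homotopy type of the essential summand.
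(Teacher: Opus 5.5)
Your proposal is correct and follows the paper's argument: a ribbon concordance is in particular a concordance, and $\gamma_0$ is an ordinary concordance invariant by \cite[Proposition~2]{HW-cables} (see also \cite[Theorem~1]{Hom-conc}), so its height $h_0$ is invariant as well. One minor correction: no reversing or mirroring is needed, since the annulus $C$ already exhibits $J$ and $K$ as concordant; your remarks on stable equivalence and on the well-definedness of height just unpack that citation.
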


\begin{proposition}\label{prop:minheightcable}
Suppose that $\mh(K)\neq 0$ and let $p$ be a positive integer. Then
\[ \mh(K_{p,q}) = p \cdot \mh(K) -p+1. \]
\end{proposition}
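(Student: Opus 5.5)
The plan is to use the Hanselman--Watson cabling formula for immersed curves \cite{HW-cables}, which computes $\gamma(K_{p,q})$ from $\gamma(K)$ geometrically. One draws $p$ copies of $\gamma(K)$. Each copy is stretched vertically by a factor of $p$, and consecutive copies are shifted vertically by $q$ (and horizontally by one unit). The pegs are then translated horizontally so that they lie on a single vertical line, and the result is pulled tight; this yields $\gamma(K_{p,q})$.

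The first key step is to observe that the construction respects components. Each homologically inessential component $\gamma_i$ of $\gamma(K)$ contributes its own collection of curves to $\gamma(K_{p,q})$, and these are homologically inessential as well. The essential component $\gamma_0(K)$ contributes $\gamma_0(K_{p,q})$. So $\mh(K_{p,q})$ is the minimum height among the images of the inessential components. In fact, for each inessential $\gamma_i$ only one of the $p$ lifted copies needs to be tracked: the $p$ copies of a closed, nullhomologous curve are related by deck translations in the torus and represent the same curve there.

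The second step is the height computation for a single inessential component $\gamma_i$ of height $h$. Stretching by $p$ gives an extreme vertical span of $ph$ between the highest and lowest points. The extremes of an inessential curve are realized by arcs wrapping around pegs. Before stretching, the topmost and bottommost pegs that $\gamma_i$ encloses are $h-1$ apart, since heights are measured at integer levels and pegs sit at half-integers. After stretching, these pegs are $p(h-1)$ apart. When the pegs are slid onto one line and the curve is pulled tight, it remains caught on these extremal pegs. The pulled-tight curve therefore spans from just above the top peg to just below the bottom peg, giving height
\[
p(h-1)+1 = ph-p+1.
\]
The curve cannot shrink further, because pulling tight in the peg-board torus preserves which pegs the curve must go around. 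Applying this to each $\gamma_i$ and noting that $h\mapsto ph-p+1$ is increasing, the minimum over inessential components is $p\cdot\mh(K)-p+1$.

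The main obstacle is making the pull-tight step rigorous. One must show that the stretched and sheared curve, after the pegs are moved, really has its highest and lowest points at exactly the images of the original extremal pegs. Two things need to be ruled out: the curve cannot tighten further, and other copies and pegs cannot obstruct it to make it taller. I would handle this with a direct lift argument. Take the span between the topmost and bottommost pegs around which the curve turns, and check that the homotopy used in \cite{HW-cables} preserves the vertical order of pegs and never moves pegs vertically. Since peg-board homotopy cannot cross pegs, the tightened curve must still turn around the extremal pegs. No new pegs lie outside the original vertical range of the copy, so the curve does not get taller. Finally, I would check the integrality convention against the example of height $2$ from \Cref{fig:mh}: a height-$2$ figure-eight curve around two adjacent pegs should become a curve around two pegs $p$ apart, of height $p+1 = 2p-p+1$.
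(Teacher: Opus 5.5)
Your proposal is correct and follows the paper's argument. You apply the Hanselman--Watson cabling recipe, observe that the extremal pegs enclosed by an inessential component of height $h$ are $h-1$ apart and become $p(h-1)$ apart after stretching, conclude that the cabled component has height $p(h-1)+1$, and take the minimum using monotonicity of $h\mapsto ph-p+1$. Your discussion of the pull-tight step goes beyond the paper's sketch; one small correction is that, in the cylinder where heights are measured, the $p$ copies are distinct components (vertical translates by multiples of $q$), so the reason you may track a single copy is simply that all of them have the same height.
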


\begin{proof}
We consider the cabling formula for immersed curves from \cite{HW-cables}. We are only interested in $\gamma_{>0}$. The recipe for $(p,q)$-cabling is as follows: each component $\gamma_i$ of $\gamma_{>0}$ spawns $p$ components. These $p$ components are obtained by 
\begin{enumerate}
	\item taking $p$ copies of $\gamma_i$, each stretched by a factor of $p$, 
	\item shifting each subsequent copy down from the preceding copy by $q$ units, and
	\item compressing the copies horizontally.
\end{enumerate}	
	See Figure \ref{fig:cables}. It follows from this algorithm that an immersed curve $\gamma_i$ of height $h$ becomes an immersed curve of height $p h -p +1$. Indeed, the difference between the highest and lowest pegs enclosed by $\gamma_i$ is $h-1$, and so when we stretch by a factor of $p$, this difference becomes $p(h-1)$. Hence the resulting height is $p(h-1)+1$, as desired.
\end{proof}

\begin{proposition}\label{prop:essentialheightcable}
For every knot $K$ and every cable $K_{p,q}$,
\[
        h_0(K_{p,q})
        =
        p\cdot h_0(K)+(p-1)(|q|-1).
\]
\end{proposition}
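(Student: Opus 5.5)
The plan is to mirror the proof of \Cref{prop:minheightcable}, but now to track the essential component $\gamma_0$ through the Hanselman--Watson cabling algorithm of \cite{HW-cables}. Since $K_{p,-q}$ is the mirror of $(\overline{K})_{p,q}$, and mirroring reflects the immersed curves top-to-bottom without changing heights, I will reduce to $q>0$. The case $q=\pm1$ (where the formula reads $h_0(K_{p,q})=p\,h_0(K)$) will serve as a sanity check. Another check is the unknot: there $\gamma_0$ is a horizontal line with $h_0=0$, and the formula returns $(p-1)(|q|-1)=2g(T_{p,q})$, consistent with $\gamma(T_{p,q})=\gamma_0(T_{p,q})$ for an L-space knot.

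First I will set up coordinates. I lift $\gamma_0(K)$ to the plane so that it is a curve running from far left to far right, which is possible because $\gamma_0$ is homologically essential. It agrees with a horizontal line outside a compact region and weaves around pegs in the column of heights between $-h_0(K)/2$ and $h_0(K)/2$. I then apply the recipe: take $p$ copies of $\gamma_0$, stretch each vertically by a factor of $p$, shift each successive copy down by $q$ units, and compress horizontally. The next point is that for the essential component these $p$ copies do not stay separate. When the planar picture is reinterpreted in the torus, the horizontal ends of consecutive copies are joined, so that the result is a single essential curve $\gamma_0(K_{p,q})$. I would cite this structural statement from \cite{HW-cables} rather than reprove it.

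Next I compute the height. The stretched copy of $\gamma_0$ has its topmost and bottommost turning points around pegs at heights differing by $p\,h_0(K)$ before pulling tight, measured with the same convention as in \Cref{prop:minheightcable}. The staggering by $q$ between copies, together with the connecting horizontal segments, forces the combined curve to wind around an extra stack of pegs after pulling tight. This is exactly the torus-knot contribution. To isolate it, I would compare with the unknot case, where the same connecting segments produce a curve of height $(p-1)(q-1)$. The claim is that the two contributions add: the top of the pulled-tight curve is the top of the first stretched copy raised by $(p-1)(q-1)/2$, and the bottom is the bottom of the last copy lowered symmetrically. This gives
\[
h_0(K_{p,q})=p\,h_0(K)+(p-1)(q-1).
\]

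The main obstacle is the pulling-tight step. I must justify that homotoping the joined curve rel pegs neither cancels any of the vertical extent coming from the stretched copies of $\gamma_0$ nor adds more than the torus-knot amount. My first attempt would be to track the extremal pegs explicitly: the highest peg enclosed by the first copy and the lowest peg enclosed by the last copy lie on opposite sides of the curve. Then no homotopy avoiding pegs can reduce the height below the stated value, and the symmetry of $\gamma_0(K_{p,q})$ under rotation by $\pi$ pins down the upper bound. If this direct argument becomes messy, I would instead recall that $\gamma_0$ is a concordance invariant and is determined by the $\epsilon$-equivalence class of $\CFK$. I would then verify the formula on a model complex in each such class, for example a staircase or a simple model realizing a given $\gamma_0$, where the cabling can be computed explicitly.
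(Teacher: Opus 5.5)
Your strategy is the paper's: apply the Hanselman--Watson recipe to $\gamma_0$ alone and read off the vertical extent. The reduction to $q>0$ by mirroring is fine. The gap is in the height count, which is the whole content of the statement. You take the untightened stretched copy to have extent $p\,h_0(K)$. You then place the top of the tightened curve $(p-1)(q-1)/2$ \emph{above} the top of the first copy, and its bottom the same amount below the bottom of the last copy. But you also stagger the copies, so the first and last copies are already $(p-1)q$ apart. Your description therefore gives $p\,h_0(K)+(p-1)q+(p-1)(q-1)$, not the formula you state. The stated value appears only because you calibrated a ``torus-knot contribution'' against the unknot and asserted that it adds to $p\,h_0(K)$. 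Nothing in the argument derives that additivity, and the geometric picture offered for it is wrong.

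The correct bookkeeping, which is the paper's, moves the ends \emph{inward}. For $h_0(K)>0$, the extreme pegs that $\gamma_0(K)$ turns around are $h_0(K)-1$ apart, hence $p(h_0(K)-1)$ apart after stretching. Once the pegs of all $p$ copies sit on one vertical line, the lattice is dense again, and each extreme arc tightens to within $1/2$ of its peg. So each tightened copy has extent $p\,h_0(K)-(p-1)$, exactly as in Proposition~\ref{prop:minheightcable}, whose convention you invoke but then do not apply. Adding the offset $(p-1)q$ between the first and last copies gives $p\,h_0(K)-(p-1)+(p-1)q$. Thus $(p-1)(q-1)$ is the stagger minus the tightening loss $p-1$, not extra winding added beyond the copies.

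Your extremal-peg argument is the right tool for the lower bound when $h_0(K)>0$. It is vacuous when $h_0(K)=0$, since then the individual copies enclose no pegs. In that case $\gamma_0(K)$ is the horizontal line, so $\gamma_0(K_{p,q})=\gamma(T_{p,q})$, and your ``sanity check'' is actually the proof of that case and should be promoted to one. Rotation symmetry only relates the top to the bottom; the matching upper bound comes from exhibiting the tightened representative. The $\epsilon$-equivalence fallback does not avoid any of this, since computing the cable of each model complex requires the same cabling formula.
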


\begin{proof}
This is the cabling formula \cite{HW-cables} applied to the homologically essential component. 
We first consider the case $q>0$. We would like to know the height of the homologically essential component. Recall that we have $p$ copies of $\gamma_0$, each stretched by a factor of $p$, and shifted down $q$ units. The difference between the top Alexander grading and the bottom Alexander grading of each stretched copy of $\gamma_0$ is $p\cdot h_0(K)-(p-1)$. We have $p-1$ subsequent copies of the stretched $\gamma_0$, each shifted down by $q$ units, for a total of $(p-1)q$. Thus, the height is
\[ p\cdot h_0(K) -(p-1) +(p-1)q,\]
as desired.
When $q<0$, a similar argument yields a height of
\[ p\cdot h_0(K) -(p-1) +(p-1)(-q).\]
This completes the proof.
\end{proof}

\begin{figure}[ht]
\centering
\labellist
\endlabellist
\includegraphics[scale=1]{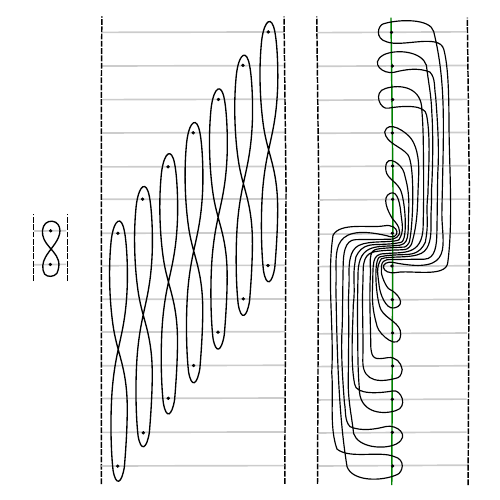}
\caption{The $(7, -1)$-cable of a height two component $\gamma_i$.}
\label{fig:cables}
\end{figure}

%%%%%%%%%%%%%%%%%
%%%%%%%%%%%%%%%%%
%%%%%%%%%%%%%%%%%
%%%%%%%%%%%%%%%%%
%%%%%%%%%%%%%%%%%

\section{Knot Floer complexes}\label{sec:knot-floer-complexes}
For our discussion of knot Floer complexes \cite{OS-knots, Rasmussen}, we use the notation of \cite{Hom-lecture}, where $\CFK(K)$ is viewed as bigraded chain complex over $\F[U,V]$. Then 
\[ \HFKm(K) = H_*(\CFK(K)/ V=0). \]
The immersed curve invariant $\gamma(K)$ contains the exact same information as $\CFK(K)/(UV=0)$. See \cite[Section 2]{HLP-unknotting} and \cite[Section 2]{HomPark} for further exposition on the relationship between $\CFK(K)$ and $\gamma(K)$.

\begin{proposition}\label{prop:Fsummand}
Suppose that $\HFKm(J_1)$ and $\HFKm(J_2)$ each have an $\F$-summand. Then 
\[ \mh(J_1 \# J_2) = 2. \]
\end{proposition}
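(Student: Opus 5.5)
The approach is to translate the hypothesis into a statement about $\CFK$ over $\F[U,V]$ and use the Künneth formula $\CFK(J_1\#J_2)\simeq \CFK(J_1)\otimes_{\F[U,V]}\CFK(J_2)$ \cite{OS-knots}. Since $\HFKm(K)=H_*(\CFK(K)/V)$ is a module over $\F[U]$, an $\F$-summand is a $U$-torsion summand of order one, $\F[U]/(U)$. Over $\F[U]$ (a PID), one can choose a basis in which the vertical differential is reduced. Such an $\F$-summand is then a length-one vertical arrow $x\to Uy$ that splits off the $V=0$ complex. I will write the argument with $U$ and $V$ arranged so that the arrow has length one in the appropriate direction.

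\textbf{Step 1: extract a length-one arrow.} By the usual reduction (e.g.\ \cite[Section 2]{HomPark}), $\CFK(J_i)$ is homotopy equivalent to a reduced complex. Pass to the truncation $\CFK/(UV=0)$, which carries the same information as $\gamma(J_i)$. The $\F$-summand in $\HFKm(J_i)$ shows that the truncated complex has a basis containing a pair $a_i,b_i$ with a length-one arrow between them (say $\partial a_i = U b_i + \dots$), where $b_i$ receives no other vertical arrow that would change the summand. Equivalently, one component of $\gamma(J_i)$ contains a segment wrapping around a single peg.

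\textbf{Step 2: tensor and find a height-two component.} In $\CFK(J_1)\otimes\CFK(J_2)$, the four generators $a_1a_2$, $a_1b_2$, $b_1a_2$, $b_1b_2$ span a square: there are length-one arrows in one direction coming from the first factor and length-one arrows in the other direction coming from the second. This box of side length one is the model for a homologically inessential figure-eight curve of height $2$, i.e.\ one encircling two adjacent pegs. I must check that this box splits off as a direct summand of the tensor product modulo $UV$, possibly after a change of basis. This is where the summand hypotheses on both factors are essential. Within each factor, $\{a_i,b_i\}$ is a summand in one direction (vertical for $J_1$). In the horizontal direction, however, other arrows might leave $a_i$ or $b_i$. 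To handle this, I will use the symmetry $\CFK(K)$ $U\leftrightarrow V$ and apply the $\F$-summand to the second factor in the opposite direction. Concretely, I will arrange the box so that its vertical sides come from the $U$-direction summand of $J_1$ and its horizontal sides come from the $V$-direction summand of $J_2$, which is the symmetric image of the $\F$-summand of $\HFKm(J_2)$. I expect this step to be the main obstacle. The other arrows leaving the box, namely horizontal arrows out of $a_1$ and $b_1$ and vertical arrows out of $a_2$ and $b_2$, come with factors of $U$ or $V$ that multiply to a multiple of $UV$ in the other factor. They should therefore vanish in the quotient by $UV$, or be cancellable by a filtered change of basis. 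I would check this first on the model $\CFK(T_{2,3})\otimes\CFK(T_{2,3})$.

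\textbf{Step 3: conclude.} The split box gives an inessential component of $\gamma(J_1\#J_2)$ of height $2$, so $\mh(J_1\#J_2)\le 2$. By Remark~\ref{rmk:heightconstraints}, any inessential component has height at least $2$, so equality holds.
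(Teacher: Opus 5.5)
Your outline matches the paper's: the K\"unneth formula, a length-one arrow $\partial_U a_1=Ub_1$ from $J_1$, a length-one arrow $\partial_V a_2=Vb_2$ from $J_2$ obtained via the $U\leftrightarrow V$ symmetry, a $1\times 1$ box, and Remark~\ref{rmk:heightconstraints} for the lower bound. However, the step you defer as ``the main obstacle'' is the entire content of the proof, and the mechanism you propose for it fails. Working modulo $UV$, write $\partial=\partial_U+\partial_V$. Then
\[
\partial(a_1a_2)=Ub_1a_2+Va_1b_2+(\partial_Va_1)a_2+a_1(\partial_Ua_2).
\]
The last two terms carry pure powers of $V$ and of $U$, respectively. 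They are not multiples of $UV$, so they survive in $\CFK/(UV)$. The same happens for $a_1b_2$ and $b_1a_2$. So the naive box $\{a_1a_2,a_1b_2,b_1a_2,b_1b_2\}$ is not even closed under $\partial$. You also never address arrows \emph{into} the box, which must be eliminated before it can be a direct summand.

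The missing idea is to absorb the stray outgoing terms into corrected corners, rather than hope they vanish. Set $a=a_1a_2$, $b=a_1b_2+V^{-1}(\partial_Va_1)a_2$, and $c=b_1a_2+U^{-1}a_1(\partial_Ua_2)$, and correct $b_1b_2$ by the analogous cross terms to get $d$. Then $\partial_Va=Vb$, $\partial_Ua=Uc$, $\partial_Ub=Ud$, and $\partial_Vc=Vd$. This is where the hypotheses on both factors are used. For instance, $\partial_Vb=0$ because $\partial_Vb_2=0$ and $\partial_Va_2=Vb_2$, so the two copies of $(\partial_Va_1)b_2$ cancel.

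Next, change basis to remove incoming arrows, in order. A term $U^mV^nd$ in $\partial z$ is removed by replacing $z$ with $z+U^{m-1}V^nb$ if $m>0$, or with $z+U^mV^{n-1}c$ if $n>0$. A term $U^mV^nb$ must have $n\geq 1$ by $\partial^2=0$, and is removed by replacing $z$ with $z+U^mV^{n-1}a$. Finally, $\partial^2=0$ excludes any remaining arrows into $c$ and $a$. Only after this does the box yield a height-two inessential component of $\gamma(J_1\#J_2)$. Checking the model $T_{2,3}\#T_{2,3}$ is not a substitute for this argument.
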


\begin{proof}
By the hypothesis that $\HFKm(J_1)$ has an $\F$-summand, there exists a basis $\{x_i\}$ for $\CFK(J_1)$ with
\[ \d_U(x_1) = U x_2. \]
By the hypothesis that $\HFKm(J_2)$ has an $\F$-summand together with the symmetry of $\HFKm$, there exists a basis $\{y_i\}$ for $\CFK(J_2)$ with
\[ \d_V(y_1) = V y_2. \]

The main idea of the proof is that we will use the two pairs of generators $\{x_1, x_2\}$ and $\{y_1, y_2\}$ to form a $1 \times 1$ box (roughly generated by $\{x_1y_1, x_1y_2, x_2y_1, x_2y_2\}$) in $\CFK(J_1 \# J_2) = \CFK(J_1) \otimes_{\F[U,V]} \CFK(J_2)$, giving us the desired height 2 summand.

We will show that 
\begin{align*} 
a &= x_1y_1 \\
b &= x_1y_2 + V^{-1} (\d_V x_1) y_1 \\
c &= x_2y_1 + U^{-1} x_1 \d_U y_1 \\
d &= x_2y_2 + U^{-1} x_1 \d_U y_2 + V^{-1} (\d_V x_2) y_1 + U^{-1}V^{-1} (\d_V x_1) \d_U y_1
\end{align*}
form a direct summand of $\CFK(J_1 \# J_2)$ of height 2.

Indeed, it is straightforward to verify that
\begin{align*}
\d_V a &= V b \\
\d_U a &= U c \\
\d_U b &= U d \\
\d_V c &= V d,
\end{align*}
or, more visually,

\[\begin{tikzcd}
	c & {} & a \\
	\\
	d && b.
	\arrow["V", from=1-1, to=3-1]
	\arrow["U"', from=1-3, to=1-1]
	\arrow["V", from=1-3, to=3-3]
	\arrow["U"', from=3-3, to=3-1]
\end{tikzcd}\]

However, there may be other arrows coming into $a,b,c,$ and $d$. We now perform a change of basis to split off $\{a, b, c, d \}$ as a direct summand. If there is an incoming arrow to $d$, that is, $U^m V^n d$ appears as a term in $\d z$ for some basis element $z$ (where $z \neq b, c$), where at least one of $m$ and $n$ is positive. Suppose $m$ is positive (the case $n$ positive is analogous). Replacing $z$ with $z' = z + U^{m-1}V^n b$ eliminates the arrow from $z$ to $d$, and does not introduce any new arrows into $d$. Continuing in this manner, we can remove all incoming arrows to $d$ (besides the arrows from $b$ and $c$).

We now consider arrows into $b$. Suppose there is an arrow from some basis element $z$ (where $z \neq a$) to $b$. That is, $U^m V^n b$ appears as a term in $\d z$. Furthermore, using $\d^2=0$, we see that $n \geq 1$. 
%Then the path $z \to b \to d$ contributes $U^{m+1} V^n d$ to $\d^2 z$. Since $\d^2=0$, there must be another path from $z$ to $d$ to cancel that contribution (mod $2$). The only other incoming arrow to $d$ is from $c$. Hence $U^{m+1} V^{n-1} c$ must also appear in $\d z$. 
Replace $z$ with $z' = z + U^m V^{n-1} a$, which eliminates the arrow from $z$ to $b$, and does not introduce any new arrows to $b$ or $d$. Continuing in this manner, we can remove all incoming arrows to $b$ (besides the arrow from $a$ to $b$).

Now $\d^2=0$ implies that there are not any incoming arrows to $c$ (besides the arrow from $a$ to $c$), and similarly, $\d^2=0$ also implies that there are not any incoming arrows to $a$. Thus, $\{a, b, c, d \}$ forms a direct summand, as desired.
\end{proof}

Finally, we recall some basic symmetries of knot Floer homology $\HFKhat$
and a cabling result for $\HFKhat$. Recall that
\[
        \HFKhat(K)=H_*(\CFK(K)/(U=0,V=0)).
\] We write $\HFKhat_m(K,A)$ for the summand of $\HFKhat(K)$ in Maslov grading $m$ and Alexander grading $A$. Here, we summarize the known symmetries on knot Floer homology $\HFKhat$ from~\cite{OS-knots} (see also \cite{Hedden-Watson:2018}):

\begin{proposition}[{\cite[Section 3]{OS-knots}}]\label{prop:symmetries}
For every knot $K \subset S^3$ and for all $m,A \in \mathbb{Z}$, we have
\begin{equation*}\label{eq:HFK-alexander-symmetry}
        \HFKhat_m(K,A) \cong \HFKhat_{m-2A}(K,-A)
\end{equation*}
and
\begin{equation*}\label{eq:HFK-mirror-symmetry}
        \HFKhat_m(-K,A) \cong \HFKhat_{-m}(K,-A),
\end{equation*}
where $-K$ denotes the reverse of the mirror image of $K$.
\end{proposition}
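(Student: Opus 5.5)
This is a citation of \cite[Section~3]{OS-knots}, so the plan is to recover both isomorphisms from the structure of $\CFK(K)$ over $\F[U,V]$, with the grading conventions of \cite{Hom-lecture}. In those conventions $U$ has bigrading $(\mathrm{gr}_U,\mathrm{gr}_V)=(-2,0)$ and $V$ has $(0,-2)$, and we set $A=\tfrac12(\mathrm{gr}_U-\mathrm{gr}_V)$. The Maslov grading $m$ on $\HFKhat$ is $\mathrm{gr}_U$. Then $\mathrm{gr}_V=\mathrm{gr}_U-2A=m-2A$.

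For the first isomorphism I will use the fact that $\CFK(K)$ is symmetric under interchanging the roles of $U$ and $V$. This is the basepoint swap $w\leftrightarrow z$ on a doubly pointed Heegaard diagram, followed by reversing orientation of both $\Sigma$ and the knot, which returns a diagram for $K$ itself. The result is a chain homotopy equivalence between $\CFK(K)$ and the complex with $U,V$ exchanged; this exchange interchanges $\mathrm{gr}_U$ and $\mathrm{gr}_V$. Setting $U=V=0$, a generator in bigrading $(\mathrm{gr}_U,\mathrm{gr}_V)=(m,m-2A)$ goes to one in bigrading $(m-2A,m)$. That generator has Alexander grading $-A$ and Maslov grading $m-2A$, which gives $\HFKhat_m(K,A)\cong\HFKhat_{m-2A}(K,-A)$.

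For the second isomorphism I will pass from $K$ to $-K$. Mirroring replaces $\Sigma$ by $-\Sigma$, and the chain complex of $-K$ is the dual complex $\CFK(K)^*$ over $\F[U,V]$, with both gradings negated. Reversing the string orientation exchanges $w$ and $z$ on top of this. In the hat version, $\widehat{\CFK}(-K)$ is therefore the $\F$-dual of $\widehat{\CFK}(K)$ with $(m,A)\mapsto(-m,-A)$, up to the orientation-swap bookkeeping. Over the field $\F$, homology of the dual complex is the dual of homology, which yields $\HFKhat_m(-K,A)\cong\HFKhat_{-m}(K,-A)$.

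The main obstacle is tracking the grading conventions correctly, in particular checking which of the two Maslov gradings survives after dualizing and swapping basepoints. I would settle this by a sanity check on the right-handed trefoil, whose generators have $(m,A)=(0,1),(-1,0),(-2,-1)$. The first isomorphism should send these to $(-2,-1),(-1,0),(0,1)$, which is again the same set, as required. The second should produce exactly the left-handed trefoil's $\HFKhat$.
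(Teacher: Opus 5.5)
Your argument is correct and is the standard one from \cite[Section~3]{OS-knots}, which the paper simply cites without proof. The $U\leftrightarrow V$ symmetry comes from replacing $(\Sigma,\alpha,\beta,w,z)$ by $(-\Sigma,\beta,\alpha,z,w)$, and the mirror comes from $\Sigma\mapsto-\Sigma$ giving the dual complex. The ``bookkeeping'' you left implicit works out: the $w\leftrightarrow z$ swap for the reversal gives $\HFKhat_m(-K,A)\cong\HFKhat_{-m+2A}(K,A)$, and your first isomorphism turns this into $\HFKhat_{-m}(K,-A)$.
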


Finally, Cheng--Hedden--Sarkar~\cite{Cheng-Hedden-Sarkar:2025} compute knot Floer homology in the top Alexander grading; for our purposes, we only need the following $(p,1)$-cable case.

\begin{proposition}[{\cite[Corollary 1.7]{Cheng-Hedden-Sarkar:2025}}]\label{prop:cablingHFKhat}
Let $K$ be a knot of genus $g$, and let $p>1$. Then
$g(K_{p,1})=pg$ and there is an isomorphism
\begin{equation*}\label{eq:HFK-cable-top-Alexander}
        \HFKhat_m(K_{p,1},pg) \cong \HFKhat_m(K,g)
\end{equation*}
for each $m\in \Z$.
\end{proposition}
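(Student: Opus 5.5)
Since the statement is quoted from \cite[Corollary~1.7]{Cheng-Hedden-Sarkar:2025}, we could simply cite it. Here is how we would prove it directly with the tools of Section~\ref{sec:immersed}.

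\textbf{The genus.} The genus statement is immediate from Schubert's formula~\cite{Schubert}. We have $g(K_{p,1})=p\,g(K)+g(T_{p,1})$, and $T_{p,1}$ is the unknot, so $g(K_{p,1})=pg$.

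\textbf{Top-grading generators via the cabling recipe.} Recall that $\HFKhat(K)$ is recovered by pairing $\gamma(K)$ with a vertical line through the pegs, with intersection points at integer heights recording the Alexander grading. By~\cite{OS-genus}, the top nonzero Alexander grading of $\gamma(K)$ is $g$. We apply the cabling recipe of~\cite{HW-cables} from the proof of Proposition~\ref{prop:minheightcable}, now to all components of $\gamma(K)$ and with $q=1$. We take $p$ copies of $\gamma(K)$, stretch each vertically by $p$, shift each successive copy down by one unit, and compress horizontally.

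\textbf{Identifying the top generators.} We track which intersection points with the vertical line survive at the highest level. After stretching, an intersection at height $A$ sits at height $pA$ up to a bounded shift that depends only on the copy index. The shifts are $0,-1,\dots,-(p-1)$, so only the first (unshifted) copy can reach height $pg$. Its intersections at that height are exactly the stretched images of the intersections of $\gamma(K)$ at height $g$. This gives a bijection between generators of $\HFKhat(K_{p,1})$ in Alexander grading $pg$ and generators of $\HFKhat(K)$ in Alexander grading $g$.

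We would then check that, after pulling the curves tight, no bigons between the line and the curves cancel these intersection points. Such a bigon would have to stay in the topmost strip of height $1$. Because the horizontal compression is a homotopy, bigons there correspond to bigons for $\gamma(K)$ at height $g$. A cleaner alternative is to work in a reduced model where $\gamma(K)$ is already in minimal position, so that the top-grading rank equals $\dim \HFKhat(K,g)$ at the chain level.

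\textbf{Main obstacle: the Maslov grading.} We expect the Maslov grading to be the hardest step, since the immersed-curve picture only determines relative Maslov gradings via grading arrows/bigon indices. The first step is to show that the relative Maslov gradings among the top generators agree with those in $\HFKhat(K,g)$. This holds because the index of a bigon/lune connecting two top points of the stretched first copy equals that of the corresponding bigon for $\gamma(K)$: stretching and compressing do not change the number of pegs enclosed or the rotation contributions.

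It then remains to fix one absolute grading. Here we would use the symmetry $\HFKhat_m(K,A)\cong \HFKhat_{m-2A}(K,-A)$ from Proposition~\ref{prop:symmetries}, together with the behavior of the distinguished generator on $\gamma_0$ under cabling. For $q=1$, the essential curve's generator that carries $\tau$ and the grading $0$ generator of $\HF(S^3)$ is located consistently, so we can compare absolute gradings through the cabling formula for $\tau$ and the $\gamma_0$ pairing.

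If this bookkeeping becomes unwieldy, we would fall back on Hedden's Heegaard diagram for the $(p,1)$ cable. There, top-grading generators are exactly the generators of $K$'s diagram in grading $g$ paired with the extremal intersection point in the winding region. Domains between such generators are the original domains, stretched but not reshaped, so Maslov indices are preserved. The absolute grading would then be pinned down by comparing with the unknot case, where both sides are $\F_{(0)}$.
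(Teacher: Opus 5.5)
The paper gives no proof of this proposition. It quotes Cheng--Hedden--Sarkar's Corollary~1.7, exactly as your first sentence proposes, and your derivation of $g(K_{p,1})=pg$ from Schubert's formula is fine. Your self-contained immersed-curve argument, however, has genuine gaps. They sit in the graded isomorphism, which is the part the paper actually needs: the Maslov shift $2(p-1)g$ is what separates $K_{p,1}$ from $K_{p,-1}$ in the proof of Theorem~\ref{thm:ribboncable}.

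\textbf{Top generators.} You assert, but do not prove, that the intersection points of $\gamma(K_{p,1})$ in Alexander grading $pg$ are exactly the stretched top points of the first copy.
Step (3) of the recipe compresses the copies into one column. This inserts the $p-1$ pegs of the other copies between consecutive pegs of copy $0$ and drags the curves along. The result must then be re-tightened in the new peg configuration. The common vertical line is not the image of the individual copies' vertical lines, so intersection points are created and cancelled in ways your sketch does not track. Your remark that a cancelling bigon ``would have to stay in the topmost strip'' is exactly the statement that needs an argument.
The bookkeeping is also off. As in the proof of Proposition~\ref{prop:essentialheightcable}, a stretched copy of a height-$h$ curve has height $ph-(p-1)$. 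So top and bottom points are displaced from $pA$ by different amounts, not by a shift depending only on the copy index.

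\textbf{Maslov gradings.} In minimal position, surviving top generators are generally not joined by bigons. Relative gradings must therefore be computed from paths or domains that dip below the top level, and after compression these enclose the inserted pegs. So your claim that compression does not change the number of enclosed pegs is false, and you would have to show that the resulting changes in the peg term cancel.

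Anchoring the absolute grading to $\gamma_0$ and $\tau$ only reaches the top group when it lies on $\gamma_0$. For the figure-eight knot, $\gamma_0$ is the horizontal line at height $0$, while $\HFKhat(4_1,1)=\F_{(1)}$ lies on the inessential component. No path along the curve connects the two, and the recipe of Section~\ref{sec:immersed} says nothing about how relative gradings between different components transform.

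The Heegaard-diagram fallback has the same two holes. Hedden's identification of the top-grading generators of cable diagrams is carried out for sufficiently large twisting. For the $(p,1)$ cable you would still need to exclude other generators in Alexander grading $pg$, and to match holomorphic disk counts, not just Maslov indices. Agreement for the unknot also does not fix the absolute grading for a general $K$.

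As it stands, the citation is what carries the proof, just as in the paper.
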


\section{Primeness and cable detection}\label{sec:cable-detection}
We now apply Proposition~\ref{prop:Fsummand} to prove the results on prime and
cable predecessors stated in the introduction. We begin with
Theorem~\ref{thm:fiberedprime}, whose statement we recall.

\begin{reptheorem}{thm:fiberedprime}
Let $K$ be a fibered knot, and let $p>1$. Suppose that $J$ is a nontrivial knot with
\[
        J\leq K_{p,q}.
\]
Then $J$ is prime. Furthermore, if $\|S^3 \smallsetminus K\|=0$, then $J$ is
a cable of some knot.
\end{reptheorem}

\begin{proof}
Suppose that $K$ is fibered. Then $K_{p,q}$ is fibered as well~\cite{Stallings}. If $J\leq K_{p,q}$, then Miyazaki's theorem~\cite{miyazaki}, building on work of Silver~\cite{silver} and Kochloukova~\cite{Kochloukova}, implies that $J$ is fibered.

Suppose that $J$ is composite, say $J=J_1\# J_2$. If $J$ is fibered, then both $J_1$ and $J_2$ are fibered~\cite{Gabai}. The proof of~\cite[Theorem~1.1]{BaldwinVV}, together with the relationship between $\HFKhat$ and $\HFKm$, implies that $\HFKm$ of a fibered knot contains an $\mathbb{F}$-summand. Then Proposition~\ref{prop:Fsummand} implies that $\mh(J)=2$. In particular, $\gamma(J)$ has a homologically inessential component, which implies that both $\gamma(K_{p,q})$ and $\gamma(K)$ also have a homologically inessential component~\cite{Zemke-ribbon, HW-cables}. 
Propositions~\ref{prop:minheightobstruction} and
\ref{prop:minheightcable}, together with Remark~\ref{rmk:heightconstraints}, imply that
\[
        \mh(J)\geq\mh(K_{p,q})\geq3,
\]
contradicting $\mh(J)=2$. Hence $J$ must be prime.

Now suppose furthermore that $\|S^3\smallsetminus K\|=0$. By \cite[Theorem~1.4]{AgolRen}, we have $$\|S^3\smallsetminus J\|\leq \|S^3\smallsetminus K_{p,q}\|=0.$$ Hence, by \cite[Corollary~4.2]{Gordon-satellite}, the knot $J$ lies in the smallest class containing the unknot and closed under connected sum and cabling. Since $J$ is nontrivial and prime, it must in fact be a cable, as desired.
\end{proof}

\begin{remark}
The proof of Theorem~\ref{thm:fiberedprime} applies more generally: if $J$ is a nontrivial fibered knot and $J\leq K_{p,q}$ for some knot $K$ and $p>1$,
then $J$ is prime. If, in addition,
$\|S^3\smallsetminus J\|=0$, then $J$ is a cable of some knot.
\end{remark}

Theorem \ref{thm:mhprime} also follows readily, whose statement we recall:

\begin{reptheorem}{thm:mhprime}
Suppose that $K$ is a nontrivial fibered knot with $\mh(K)\neq2$. Then
$K$ is prime.
\end{reptheorem}

\begin{proof}
We prove the contrapositive. Suppose that $K$ is fibered and composite. Then $K=K_1\#K_2$, where $K_1$ and $K_2$ are fibered~\cite{Gabai}. As in the proof of \Cref{thm:fiberedprime}, the proof of \cite[Theorem~1.1]{BaldwinVV}, together with the relationship between $\HFKhat$ and $\HFKm$, implies that $\HFKm$ of a fibered knot contains an $\F$-summand. Proposition~\ref{prop:Fsummand} therefore implies that $\mh(K)=2$.
\end{proof}

The rest of this section is devoted to providing nontrivial examples where
\Cref{conj:cabling} is satisfied. We first give a criterion:

\begin{repcorollary}{cor:pqcable}
Let $K$ be a slice knot with $\|S^3 \smallsetminus K\|=0$, and let
$p,q>1$ be relatively prime integers. Suppose that either
$q/p>g(K)$
or $\Delta_K(t^p)$ has no nonconstant divisor lying in $\Z[t^q]$. Suppose that $J$ is a
nontrivial knot with
\[
        J \leq K_{p,q}.
\]
Then $J$ is the $(p,q)$-cable of some knot.
\end{repcorollary}

\begin{proof} Suppose that $J \leq K_{p,q}$. By \Cref{thm:fiberedprime}, $J$ is a cable. Write
\[
        J=L_{p',q'}
\]
with $p'>1$. Applying \Cref{prop:slice-full-LT}, we have
\[
        \{p,q\}=\{p',q'\}.
\]
Thus either $J=L_{p,q}$, in which case we are done, or
\[
        J=L_{q,p}.
\]
We assume the latter and in fact prove that $L$ is the unknot, which implies that $$J=T_{q,p}=T_{p,q},$$
which is a $(p,q)$-cable.

Assume first that $q/p>g(K)$. Since genus is monotone under ribbon
concordance,
\[
        g(L_{q,p})\leq g(K_{p,q}).
\]
Schubert's genus formula~\cite{Schubert} gives
\[
        q\cdot g(L)+\frac{(q-1)(p-1)}{2}
        \leq
        p\cdot g(K)+\frac{(p-1)(q-1)}{2}.
\]
Hence $qg(L)\leq pg(K)$, and combined with $q/p>g(K)$, this implies that
$g(L)=0$, that is, $L$ is the unknot.

Now assume instead that $\Delta_K(t^p)$ has no nonconstant divisor lying
in $\Z[t^q]$. By Gilmer~\cite{Gilmer},
\[
        \Delta_J(t) \mid \Delta_{K_{p,q}}(t).
\]
Using the cabling formula for Alexander polynomials, we have
\[
        \Delta_J(t)
        =
        \Delta_L(t^q)\Delta_{T_{q,p}}(t)
        =
        \Delta_L(t^q)\Delta_{T_{p,q}}(t),
\]
whereas
\[
        \Delta_{K_{p,q}}(t)
        =
        \Delta_K(t^p)\Delta_{T_{p,q}}(t).
\]
In particular, we have
\[
        \Delta_L(t^q)\mid \Delta_K(t^p).
\]
Since $\Delta_L(t^q)\in \Z[t^q]$, the hypothesis forces
$\Delta_L(t)=1$. Finally, Agol--Ren's monotonicity theorem~\cite{AgolRen} for simplicial
volume under ribbon concordance implies that $\|S^3\smallsetminus L\|=0$,
and combining this with Gordon's characterization of knots with zero
simplicial volume~\cite[Corollary~4.2]{Gordon-satellite} and
$\Delta_L(t)=1$ implies that $L$ is the unknot.
\end{proof}

We are now ready to give concrete examples satisfying \Cref{conj:cabling}:
\begin{repcorollary}{cor:Tab}Let $r$ and $s$ be distinct primes, and let $p,q>1$. Suppose that $J$ is a nontrivial knot with
\[
        J \leq \bigl(n(T_{r,s} \# -T_{r,s})\bigr)_{p,q}.
\]
Then $J$ is the $(p,q)$-cable of some knot.
\end{repcorollary}

\begin{proof} Let $K=n(T_{r,s} \# -T_{r,s})$. 
Since $r$ and $s$ are distinct primes,
\[
        \Delta_{T_{r,s}}(t)=\Phi_{rs}(t),
\]
so the roots of $\Delta_K(t)$ are primitive $rs$th roots of unity.
We will prove that $\Delta_K(t^p)$ has no nonconstant divisor lying
in $\Z[t^q]$, and then apply \Cref{cor:pqcable}.

Suppose, toward a contradiction, that $A(t^q)$ is a nonconstant divisor of
$\Delta_K(t^p)$, and let $\lambda$ be a root of $A(t^q)$. Then
$\zeta\lambda$ is also a root of $A(t^q)$ for every $q$th root of unity
$\zeta$. Since $A(t^q)\mid \Delta_K(t^p)$, the numbers
\[
        (\zeta\lambda)^p=\zeta^p\lambda^p
\]
are primitive $rs$th roots of unity for every $q$th root of unity $\zeta$.
Because $p$ and $q$ are relatively prime, the numbers $\zeta^p$, as $\zeta$
ranges over all $q$th roots of unity, also run over all $q$th roots of unity.
Hence, for every $q$th root of unity $\eta$, the product $\eta\lambda^p$ is a
primitive $rs$th root of unity.

Choose a prime $\ell$ dividing $q$. If $\ell$ is different from both
$r$ and $s$, then multiplying $\lambda^p$ by a primitive $\ell$th
root of unity $\eta$, which is also a $q$th root of unity, gives a root
$\eta\lambda^p$ of order $rs\ell$, contradiction.

It remains to consider the case $\ell=r$ or $\ell=s$. Suppose, for
example, that $\ell=r$; the other case is identical. Write
$\lambda^p=e^{2\pi i a/(rs)}$, where $\gcd(a,rs)=1$. Since $s$ is
invertible modulo $r$, choose $j$ so that $a+js\equiv 0\pmod r$. Then set
\[
        \eta=e^{2\pi i j/r}=e^{2\pi i j/\ell}.
\]
Since $\ell\mid q$, the number $\eta$ is a $q$th root of unity. However,
\[
        \eta\lambda^p=e^{2\pi i(a+js)/(rs)}
\]
is not a primitive $rs$th root of unity, since $r\mid a+js$. This is the desired contradiction,
and hence \Cref{cor:pqcable} applies.
\end{proof}

\begin{remark}\label{rmk:moreexamples}
The same argument readily applies to a connected sum of knots of the form
$T_{r,s}\#-T_{r,s}$, where $r$ and $s$ are distinct primes for each summand
and may vary from summand to summand, with no further restrictions.
\end{remark}

\section{Fixed-companion rigidity and genus bounds}\label{sec:fixed-cables-genus}

In this section, we prove the remaining results stated in the introduction. The proofs combine the ribbon-concordance obstruction from knot Floer homology with cabling formulae for immersed curves and concordance invariants. We begin with Theorem~\ref{thm:ribboncable}, which we recall.

\begin{reptheorem}{thm:ribboncable}
Let $K$ be a knot, and let $p>1$. Suppose that $J$ is a cable of $K$ with
\[
        J\leq K_{p,q}.
\]
Then $J$ is the $(p,q)$-cable of $K$.
\end{reptheorem}
\begin{proof}
Suppose that
\[
        K_{p',q'}\leq K_{p,q}
\]
where $p,p'>1$. Then Proposition~\ref{prop:cable-parameters} implies that either $|q|=|q'|=1$, or
\begin{equation*}\label{eq:pandq}
        \operatorname{sgn}(q)=\operatorname{sgn}(q')
        \qquad\text{and}\qquad
        \{p,|q|\}=\{p',|q'|\}.
\end{equation*}
In particular, it is not possible for exactly one of $|q|$ and $|q'|$ to be equal to $1$. If $K$ is the unknot, then the two cables are isotopic torus knots; hence, we may assume that $K$ is nontrivial.

First suppose that $|q'|,|q|>1$. If $p=p'$, then $q=q'$. Therefore, we may assume that $p\neq p'$, so we are in the swapped case. Thus suppose  that
\[
        p=|q'|
        \qquad\text{ and }\qquad
        |q|=p'.
\]

For the case $\mh(K)\neq 0$, if $p'<p$, then Proposition~\ref{prop:minheightcable} gives
\[
        \mh(K_{p',q'})=p'\cdot(\mh(K)-1)+1
        \,<\,
        p\cdot (\mh(K)-1)+1=\mh(K_{p,q}).
\]
This contradicts Proposition~\ref{prop:minheightobstruction}. If $p'>p$, then
\[
        g(K_{p',q'})-g(K_{p,q})
        =
        (p'-p)g(K)>0,
\]
contradicting the fact that genus is non-decreasing under ribbon
concordance~\cite[Theorem 1.5]{Zemke-ribbon}.

It remains to consider the case $\mh(K)=0$, that is, $\gamma(K)=\gamma_0(K)$. Since $K$ is
nontrivial, $h_0(K)>0$. By Proposition~\ref{prop:essentialheightcable}, we have
\[
        h_0(K_{p',q'})
        =
        p' \cdot h_0(K)+(p'-1)(p-1) \,\neq\,
        p \cdot h_0(K)+(p-1)(p'-1)=h_0(K_{p,q}).
\]
This contradicts
Lemma~\ref{lem:essential-ribbon}. Therefore the swapped case cannot occur.

Next, we consider the case when $|q'|=|q|=1$. If $p'\neq p$, then genus monotonicity again rules out the case $p'>p$. If $p'<p$ and $\mh(K)\neq 0$, then
Proposition~\ref{prop:minheightcable} gives
\[
        \mh(K_{p',q'})=p'\cdot(\mh(K)-1)+1
        \,<\,
        p\cdot(\mh(K)-1)+1=\mh(K_{p,q}),
\]
contradicting Proposition~\ref{prop:minheightobstruction}. If $p'<p$ and
$\mh(K)=0$, then $\gamma(K)=\gamma_0(K)$ and $h_0(K)>0$, while
Proposition~\ref{prop:essentialheightcable} gives
\[
        h_0(K_{p',q'})=p'\cdot h_0(K)
        \,\neq\,
        p\cdot h_0(K)=h_0(K_{p,q}).
\]
This contradicts Lemma~\ref{lem:essential-ribbon}. Thus $p'=p$.

We are left with $K_{p,1}$ and $K_{p,-1}$. Let $g=g(K)>0$, and write
\[
        V_m=\HFKhat_m(K,g).
\]
By genus detection~\cite{OS-genus}, the finite graded vector space
$V=\bigoplus_m V_m$ is nonzero. Proposition~\ref{prop:cablingHFKhat} gives
\[
        \HFKhat_m(K_{p,1},pg)\cong V_m .
\]
Using
\[
        K_{p,-1}\cong -\bigl((-K)_{p,1}\bigr),
\]
together with
Propositions~\ref{prop:symmetries} and~\ref{prop:cablingHFKhat}, gives
\[
        \HFKhat_m(K_{p,-1},pg)
        \cong
        V_{m-2(p-1)g}.
\]
Thus, in Alexander grading $pg$, the top summand of $K_{p,-1}$ is the top
summand of $K_{p,1}$ shifted by the nonzero Maslov shift $2(p-1)g$.

If there were a ribbon concordance in either direction between $K_{p,1}$ and
$K_{p,-1}$, then Zemke's ribbon-concordance injection on knot Floer homology
\cite[Theorem~1.1]{Zemke-ribbon} would give a grading-preserving injection
between these two finite graded vector spaces. This is impossible because
$V$ is finite-dimensional and nonzero. This rules out both directions and proves
that $K_{p',q'}$ and $K_{p,q}$ are isotopic.

Finally, suppose that $K$ is ribbon concordant in either direction to a proper
cable $K_{p,q}$. Then Proposition~\ref{prop:not-concordant-q-large} implies
that $|q|=1$. For the case $|q|=1$ and $K$ nontrivial, the fact that genus is non-decreasing under ribbon concordance implies that $K \ngeq K_{p,\pm1}$. It remains to show $K\nleq K_{p,\pm1}$.

Suppose first that $\mh(K)\neq0$, which implies that $\mh(K)\geq2$. Then
Proposition~\ref{prop:minheightcable} gives
\[
        \mh(K_{p,\pm1})=p\cdot\mh(K)-p+1.
\]
Therefore Proposition~\ref{prop:minheightobstruction} gives the desired
contradiction.

If $\mh(K)=0$, then $\gamma(K)=\gamma_0(K)$, and since $K$ is
nontrivial, $h_0(K)>0$. Lemma~\ref{lem:essential-ribbon} and
Proposition~\ref{prop:essentialheightcable} give
\[
        h_0(K)
        =
        h_0(K_{p,\pm1})
        =
        p\cdot h_0(K),
\]
which is a contradiction.
\end{proof}

We now prove Theorem~\ref{thm:genus}, whose statement we recall.

\begin{reptheorem}{thm:genus}
Let $K$ be a knot, and let $p>1$. Suppose that $J$ is a nontrivial knot
with
\[
        J\leq K_{p,q}.
\]
Then
\[
        g(J)\geq
        \left\lceil\frac{p\cdot h_0(K)}{2}\right\rceil+g(T_{p,q}).
\]
In particular, if $h_0(K)\neq 0$, then
$g(J)\geq p+g(T_{p,q})$. Moreover, if $J$ is a torus knot, then $J=T_{p,q}$.
\end{reptheorem}

\begin{proof}
By Lemma~\ref{lem:essential-ribbon} and
Proposition~\ref{prop:essentialheightcable},
\[
        h_0(J)=h_0(K_{p,q})
        =
        p\cdot h_0(K)+(p-1)(|q|-1)
        =
        p\cdot h_0(K)+2g(T_{p,q}).
\]
Since $h_0(J)\leq2g(J)$, it follows that
\[
        g(J)\geq
        \left\lceil\frac{p\cdot h_0(K)}{2}\right\rceil +g(T_{p,q}).
\]

Now suppose that $J$ is a torus knot. Write $J=T_{r,s}$, where
$r>1$ and $|s|>1$. Since a ribbon concordance is, in particular, a concordance, Proposition~\ref{prop:slice-full-LT} shows that
\[
        \operatorname{sgn}(q)=\operatorname{sgn}(s)
        \qquad\text{and}\qquad
        \{p,|q|\}=\{r,|s|\}.
\]
Therefore, $J=T_{r,s}$ is isotopic to $T_{p,q}$.
\end{proof}

Finally, we prove Theorem~\ref{thm:genus1}, whose statement we recall.

\begin{reptheorem}{thm:genus1}
Let $K$ be a knot, let $J$ be a genus-one knot, and let $p>1$.  Suppose that
\[
        J\leq K_{p,q}.
\]
Then
\[
        J=T_{p,q}
        \qquad\text{ and }\qquad
        \{p,|q|\}=\{2,3\}.
\]
In particular, $J$ is a trefoil knot, right-handed if $q>0$ and
left-handed if $q<0$.
\end{reptheorem}

\begin{proof}
If $\mh(J)\neq 0$, then Zemke's injectivity result~\cite[Theorem~1.7]{Zemke-ribbon},
together with the cabling formula for immersed curves~\cite[Theorem~1]{HW-cables},
implies that $\mh(K)\neq 0$, and hence $\mh(K)\geq2$. Propositions
\ref{prop:minheightobstruction} and \ref{prop:minheightcable} give
\[
        \mh(J)\geq\mh(K_{p,q})
        =p\cdot\mh(K)-p+1
        \geq p+1.
\]
Lemma \ref{lem:mhgenus} therefore implies that
\[
        g(J)\geq\left\lceil\frac{p+1}{2}\right\rceil,
\]
contradicting the assumption that $g(J)=1$.

Hence $\mh(J)=0$, and so $\gamma(J)=\gamma_0(J)$. Furthermore, $g(J)=1$
implies that $h_0(J)=2$, and then Lemma \ref{lem:essential-ribbon} implies
that $h_0(K_{p,q})=2$. Since $h_0(K)$ is even, Proposition
\ref{prop:essentialheightcable} implies that
\[
        h_0(K)=0
        \qquad\text{ and }\qquad
        (p-1)(|q|-1)=2.
\]
Thus $\{p,|q|\}=\{2,3\}$. Since $h_0(K)=0$, the cabling formula gives
\[
        \gamma_0(K_{p,q})=\gamma(T_{p,q}).
\]
The concordance invariance of $\gamma_0$ therefore implies that
\[
        \gamma(J)=\gamma_0(J)=\gamma(T_{p,q}).
\]
Finally, since knot Floer homology detects the trefoil~\cite{Ghiggini}, we conclude that $J=T_{p,q}$.
\end{proof}

\appendix

\section{Cable parameters under ordinary concordance}\label{app:examples}
We record here some folklore theorems on concordance of cables that follow from the classical Levine-Tristram signature~\cite{Levine,Tristram}. While these results may be well known to experts, we have not been able to find a concrete reference, so we record them here for the reader's convenience. As before, throughout this appendix, $(p,q)$-cabling is understood to have longitudinal winding number $p$ and to satisfy $\gcd(p,q)=1$.

We use the following convention for Levine--Tristram signatures. If $K$ is an oriented knot with Seifert matrix $V$, then for $\omega\in S^1\smallsetminus\{1\}$ we write $\sigma_K(\omega)$ for the signature of the Hermitian matrix
\[
        (1-\omega)V+(1-\overline{\omega})V^T .
\]
This convention is fixed so that the right-handed trefoil $T_{2,3}$ has
ordinary signature
\[
        \sigma_{T_{2,3}}(-1)=-2.
\]

\begin{proposition}\label{prop:not-concordant-q-large}
Let $K$ be a knot, and let $p>1$. If $K$ is concordant to $K_{p,q}$, then $|q|=1$.
\end{proposition}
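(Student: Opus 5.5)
The plan is to use Levine--Tristram signatures, which are concordance invariants at all $\omega$ that are not roots of the Alexander polynomial (and, after averaging one-sided limits, at all $\omega$). The key input is the Litherland cabling formula:
\[
        \sigma_{K_{p,q}}(\omega)=\sigma_K(\omega^p)+\sigma_{T_{p,q}}(\omega),
\]
valid for $\omega$ away from the finitely many roots of $\Delta_{K_{p,q}}$. If $K$ is concordant to $K_{p,q}$, then for generic $\omega\in S^1$ we obtain
\[
        \sigma_K(\omega)=\sigma_K(\omega^p)+\sigma_{T_{p,q}}(\omega).
\]
The goal is to show that this functional equation forces $\sigma_{T_{p,q}}\equiv 0$ (generically), and then to use that the torus knot signature function vanishes identically only when $T_{p,q}$ is trivial, i.e.\ $|q|=1$.

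To exploit the equation, I will iterate it along orbits of $\omega\mapsto\omega^p$. Since $\sigma_K$ is a bounded step function, I would average over $S^1$ (integrate with respect to the Haar measure). The map $\omega\mapsto\omega^p$ preserves Haar measure, so
\[
        \int_{S^1}\sigma_K(\omega)\,d\omega=\int_{S^1}\sigma_K(\omega^p)\,d\omega .
\]
Consequently $\int_{S^1}\sigma_{T_{p,q}}=0$. The measure-zero exceptional sets are harmless here.

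Next I use the known sign of the torus knot signature function. With the paper's convention, $\sigma_{T_{p,q}}(\omega)\le 0$ for all $\omega$ when $q>0$, and it is strictly negative on a set of positive measure whenever $q>1$. Concretely, near $\omega=-1$ we have $\sigma\le -2$, or one can use the explicit count via the formula for $\widehat I_m(T_{p,q})$ or Borodzik's limit mentioned in the acknowledgements. Symmetrically, $\sigma_{T_{p,q}}\ge 0$ and is somewhere positive when $q<-1$. Either way the integral is nonzero unless $|q|=1$, giving a contradiction.

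The main obstacle is this sign-definiteness claim. It follows from the standard description of $\sigma_{T_{p,q}}(e^{2\pi i x})$ as $-2$ times a count of lattice points $i/p+j/q$ in a window, minus the symmetric count. Concretely, it is a sum of terms each of which is nonpositive for $q>0$. The integral of $\sigma_{T_{p,q}}$ is therefore computable and nonzero; I would cite or derive it via Borodzik's formula for the averaged signature.

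An alternative fallback, if the measure argument gets delicate at jump points, is to restrict to $\omega$ on a short arc near $1$. There $\sigma_K$ and $\sigma_K(\omega^p)$ both vanish, because $\omega$ and $\omega^p$ lie before the first root of $\Delta_K$ on the circle. This forces $\sigma_{T_{p,q}}(\omega)=0$ near $1$. That does not suffice alone, since $\sigma_{T_{p,q}}$ also vanishes near $1$, so the integral approach is the main plan.
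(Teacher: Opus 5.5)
Your argument works and is essentially the paper's proof. Integrating Litherland's formula over the circle, using that $\omega\mapsto\omega^p$ preserves Haar measure, gives $I(K_{p,q})=I(K)+I(T_{p,q})$ and hence $I(T_{p,q})=0$. The paper then quotes the Borodzik--Oleszkiewicz closed form $I(T_{p,q})=-\tfrac{\operatorname{sgn}(q)}{3}\bigl(p-\tfrac1p\bigr)\bigl(|q|-\tfrac1{|q|}\bigr)$, which is nonzero once $p,|q|>1$. Your fallback of citing that formula is exactly the paper's finish.

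Your preferred sign-definiteness route has a true conclusion, but the justification you sketch is wrong. Take $q>0$ and $\Sigma=\{i/p+j/q : 1\le i\le p-1,\ 1\le j\le q-1\}$. In the standard count, each $s\in\Sigma$ contributes $-1$ to $\sigma_{T_{p,q}}(e^{2\pi i x})$ if $x<s<x+1$ and $+1$ otherwise. So the terms with $s<x$ are positive, and the signature is not a sum of nonpositive terms. This persists even after pairing $s$ with $2-s$: for $T_{3,7}$ at $x=0.49$, the pair $\{10/21,\,32/21\}$ contributes $+2$.

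What is true is
\[
\sigma_{T_{p,q}}(e^{2\pi i x})=2\bigl(N(x)+N(1-x)\bigr)-2M,
\qquad N(t)=\#\{s\in\Sigma: s<t\},\quad M=\tfrac{(p-1)(q-1)}{2}.
\]
The needed inequality $N(x)+N(1-x)\le M$ requires an actual argument. One way:
\begin{enumerate}
\item Identify $N(t)$ with the number of elements of the semigroup $\Gamma=\langle p,q\rangle$ below $tpq-p-q$.
\item Set $A=xpq-p-q$ and $B=(1-x)pq-p-q$, so that $A+B+p+q=pq-p-q$.
\item Inject $\{n\in\Gamma:n<A\}\sqcup\{m\in\Gamma:m<B\}$ into $\{n\in\Gamma: n<A+B+p\}$ by the identity on the first set and $m\mapsto m+n_0$ on the second. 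Here $n_0<A+p$ is the least element of $\Gamma$ that is $\ge A$.
\end{enumerate}
Strict negativity near $\omega=-1$ then finishes the argument, as you say.

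Finally, your suggestion to use the $\widehat I_m$ formula does not work here. That formula concerns Fourier coefficients with $m\ge1$ and says nothing about the mean $I(T_{p,q})$.
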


\begin{proof}
Suppose that $K$ is concordant to $K_{p,q}$. For a knot $J$, set
\[
        I(J)=\int_0^1 \sigma_J(e^{2\pi i x})\,dx.
\]
Since $I$ is a concordance invariant, we have
\[
        I(K_{p,q})=I(K).
\]
Litherland's cabling formula for
Levine--Tristram signatures~\cite[Theorem~2]{Litherland-signatures} gives,
away from finitely many points,
\[
        \sigma_{K_{p,q}}(\omega)
        =
        \sigma_K(\omega^p)+\sigma_{T_{p,q}}(\omega).
\]
Since 
\[
        \int_0^1 \sigma_K(e^{2\pi i px})\,dx = I(K),
\]
integrating over the circle gives
\[
        I(K_{p,q})=I(K)+I(T_{p,q}).
\]
It follows that
\[
        I(T_{p,q})=0.
\]
Borodzik--Oleszkiewicz compute
\cite[Proposition~2.1]{BorodzikOleszkiewicz-signatures}
\[
        I(T_{p,q})
        =
        -\frac{\operatorname{sgn}(q)}{3}
        \left(p-\frac1p\right)
        \left(|q|-\frac{1}{|q|}\right).
\]
Since $p>1$, the equality $I(T_{p,q})=0$ forces $|q|=1$.
\end{proof}

\begin{proposition}\label{prop:cable-parameters}
Let $K$ be a knot, and let $p,p'>1$. If $K_{p,q}$ is concordant to
$K_{p',q'}$, then either $|q|=|q'|=1$, or
\[
        \operatorname{sgn}(q)=\operatorname{sgn}(q') \qquad \text{ and } \qquad 
        \{p,|q|\}=\{p',|q'|\}.
\]
\end{proposition}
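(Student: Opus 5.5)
We will compare Levine--Tristram signature functions of the two cables. Since $K_{p,q}$ and $K_{p',q'}$ are concordant, $\sigma_{K_{p,q}}(\omega)=\sigma_{K_{p',q'}}(\omega)$ for all $\omega\in S^1$ away from roots of the Alexander polynomials. Litherland's formula then gives, away from finitely many points,
\[
\sigma_K(\omega^p)+\sigma_{T_{p,q}}(\omega)=\sigma_K(\omega^{p'})+\sigma_{T_{p',q'}}(\omega).
\]
Integrating over the circle, exactly as in the proof of Proposition~\ref{prop:not-concordant-q-large}, the $K$-terms cancel because both integrate to $I(K)$. This leaves $I(T_{p,q})=I(T_{p',q'})$. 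By Borodzik--Oleszkiewicz, this reads
\[
\operatorname{sgn}(q)\,f(p)f(|q|)=\operatorname{sgn}(q')\,f(p')f(|q'|),\qquad f(x)=x-1/x .
\]
If $|q|=1$, the left side vanishes. Since $p'>1$, this forces $|q'|=1$, and symmetrically. So either $|q|=|q'|=1$, or both sides are nonzero. In the latter case the signs agree, $\operatorname{sgn}(q)=\operatorname{sgn}(q')$, and $f(p)f(|q|)=f(p')f(|q'|)$.

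One equation in two unknowns does not determine $\{p,|q|\}$, so we need a second invariant. We will use the higher moments
\[
I_m(J)=\int_0^1 \sigma_J(e^{2\pi i x})\,e^{2\pi i m x}\,dx,
\]
or equivalently the jump structure of the signature function. The $K$-contribution $\sigma_K(\omega^p)$ is a function of $\omega^p$, so its Fourier coefficients are supported on multiples of $p$. Likewise, $\sigma_K(\omega^{p'})$ has coefficients supported on multiples of $p'$.

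Taking $m$ coprime to $pp'$ kills both $K$-terms, giving
\[
\widehat I_m(T_{p,q})=\widehat I_m(T_{p',q'})\qquad\text{for all } m \text{ with } \gcd(m,pp')=1 .
\]
This is where the computation of $\widehat I_m(T_{p,q})$, obtained as a limit of Borodzik's formula, enters. For the torus knot, the signature jumps at $e^{2\pi i k/(pq)}$ with $k$ not divisible by $p$ or $q$. The resulting Fourier coefficients form an explicit arithmetic expression in $p$, $|q|$, and $m$. For instance, for suitable $m$ it takes the form of a product of terms $\cot$-like or $\csc^2$-like sums over residues mod $p$ and mod $q$.

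The plan is to choose $m$ ranging over a suitable infinite set, such as primes not dividing $pqp'q'$. We then read off the multiset $\{p,|q|\}$ from the dependence on $m$, for example through periodicity in $m$ or the locations of poles and zeros in a generating function. Combined with $\operatorname{sgn}(q)=\operatorname{sgn}(q')$, this yields $\{p,|q|\}=\{p',|q'|\}$.

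An alternative would be to compare the discontinuity sets of the signature functions directly. The jumps of $\sigma_{T_{p,q}}$ occur at $pq$-th roots of unity that are not $p$-th or $q$-th roots. However, the jumps of $\sigma_K(\omega^p)$ can coincide with these and cancel them, so I would use that approach only as a sanity check.

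The main obstacle is the explicit Fourier computation of $\widehat I_m(T_{p,q})$ together with the extraction of $\{p,|q|\}$ from it. The first-moment case is already done in Proposition~\ref{prop:not-concordant-q-large}, but inverting the higher moments requires care. In particular, we must handle the case where $p$ and $p'$ share factors, which constrains the allowable $m$. We also need the torus-knot coefficients at those $m$ to still distinguish the pairs.
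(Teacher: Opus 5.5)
\textbf{Gap: the extraction step.} Your first step matches the paper. Integrating Litherland's formula gives $I(T_{p,q})=I(T_{p',q'})$, which handles the case $|q|=1$ or $|q'|=1$ and yields $\operatorname{sgn}(q)=\operatorname{sgn}(q')$ together with $f(p)f(|q|)=f(p')f(|q'|)$. You also correctly note that the $K$-terms contribute nothing to the $m$-th Fourier coefficient when $p\nmid m$ and $p'\nmid m$. But the decisive step is left as a plan, and the plan as stated would not work. For relatively prime $a,b>1$ and every $m$ with $a\nmid m$ and $b\nmid m$, the torus-knot coefficient is
\[
\widehat I_m(T_{a,b})=\frac{1}{\pi m}\cot\left(\frac{\pi m}{ab}\right),
\]
with an overall factor $\operatorname{sgn}(q)$ for $T_{a,q}$ with $q<0$. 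So the coefficients at primes $m\nmid pqp'q'$ depend only on the product $p|q|$ and the sign. No periodicity in $m$ or generating-function analysis over that set of $m$ can separate, say, $T_{2,15}$, $T_{3,10}$ and $T_{5,6}$. The multiset $\{p,|q|\}$ is simply not encoded in those coefficients.

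\textbf{Missing idea.} You do not need to recover $\{p,|q|\}$ from the higher moments alone. Take the single coefficient $m=1$; it satisfies your coprimality condition for every $p,p'$, so the worry about common factors never arises. It gives $\cot(\pi/(p|q|))=\cot(\pi/(p'|q'|))$, hence $p|q|=p'|q'|$, since $\cot$ is injective on $(0,\pi)$. Now combine this with the relation you already derived. Writing $r=|q|$ and $r'=|q'|$,
\[
f(p)f(r)=pr-\frac{p^2+r^2}{pr}+\frac{1}{pr},
\]
so equal products force $p^2+r^2=p'^2+r'^2$, hence $p+r=p'+r'$. Sum and product then give $\{p,r\}=\{p',r'\}$. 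This is exactly the paper's argument. Without the explicit value of $\widehat I_1(T_{a,b})$ and this elimination, your proposal does not yet prove the proposition.
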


\begin{proof}
Suppose that $K_{p,q}$ is concordant to $K_{p',q'}$. The same signature
integral used in Proposition~\ref{prop:not-concordant-q-large} gives
$I(K_{p,q})=I(K_{p',q'})$. Combining this with Litherland's cabling formula
gives
\[
        I(T_{p,q})=I(T_{p',q'}).
\]
If $|q|=1$, then $T_{p,q}$ is the unknot, so $I(T_{p,q})=0$. The above
equality gives $I(T_{p',q'})=0$, and hence $|q'|=1$. The same argument works
with $q$ and $q'$ interchanged.

We may therefore assume $|q|,|q'|>1$. Again, by \cite[Proposition~2.1]{BorodzikOleszkiewicz-signatures},
\[
        I(T_{p,q})
        =
        -\frac{\operatorname{sgn}(q)}{3}
        \left(p-\frac1p\right)
        \left(|q|-\frac1{|q|}\right),
\]
and the same formula holds for $I(T_{p',q'})$. Therefore, the equality
$I(T_{p,q})=I(T_{p',q'})$ implies
\[
        \operatorname{sgn}(q)=\operatorname{sgn}(q').
\]
Moreover, setting $r=|q|$ and $r'=|q'|$, we have
\begin{equation}\label{eq:LTintegral}
        \left(p-\frac1p\right)
        \left(r-\frac1r\right)
        =
        \left(p'-\frac1{p'}\right)
        \left(r'-\frac1{r'}\right).
\end{equation}

Next, instead of the average, for a knot $J$, we define
\[
        \widehat{I}(J)
        :=
        \int_0^1
        \sigma_J(e^{2\pi i x})e^{-2\pi i x}\,dx.
\]
We use the following elementary fact: if $f$ is a $1$-periodic integrable function and $n>1$ is an integer, then
\[
        \int_0^1 f(nx)e^{-2\pi i x}\,dx=0.
\]
Indeed, splitting the integral into the intervals $[j/n,(j+1)/n]$ and
putting $y=nx-j$ on the $j$th interval gives
\[
        \int_0^1 f(nx)e^{-2\pi i x}\,dx
        =
        \frac{1}{n}
        \left(\int_0^1 f(y)e^{-2\pi i y/n}\,dy\right)
        \sum_{j=0}^{n-1}e^{-2\pi i j/n}=0.
\]
As in the proof of Proposition~\ref{prop:not-concordant-q-large}, by applying
Litherland's cabling formula~\cite[Theorem~2]{Litherland-signatures}, we have
\begin{equation}\label{eq:LTsignature}
        \widehat{I}(K_{p,q})= \widehat{I}(T_{p,q}) \,
        =\,
        \widehat{I}(T_{p',q'}) =        \widehat{I}(K_{p',q'}) .
\end{equation}

In \cite[Section~3]{Borodzik-rho-iterated}, Borodzik computes that, for relatively prime integers $a,b>1$,
\[
        \widehat{I}(T_{a,b})
        =
        \frac{1}{\pi}\cot\left(\frac{\pi}{ab}\right).
\]
Therefore we have
\[
        \cot\left(\frac{\pi}{pr}\right)
        =
        \cot\left(\frac{\pi}{p'r'}\right),
\]
and hence
\[
        pr=p'r'.
\]
Combining this with \Cref{eq:LTintegral}, we obtain $p+r=p'+r'$, and therefore
\[
        \{p,r\}=\{p',r'\},
\]
which concludes the proof.
\end{proof}

\begin{remark}\label{rmk:KandJLTsignature}
Let $p,p'>1$. On the other hand, if we assume that $K_{p,q}$ is concordant to
$L_{p',q'}$, so that $K$ and $L$ are not necessarily the same knot, then we
only obtain $pq=p'q'$, which follows from \Cref{eq:LTsignature}. Moreover,
$|q|=1$ if and only if $|q'|=1$, since $\widehat{I}(T_{p,\pm 1})=0$.
\end{remark}

\begin{proposition}\label{prop:slice-full-LT}
Let $K$ be a slice knot, let $L$ be a knot, and let $p,p'>1$. If $K_{p,q}$ is concordant to $L_{p',q'}$, then either $|q|=|q'|=1$, or
\[
        \operatorname{sgn}(q)=\operatorname{sgn}(q')
        \qquad\text{and}\qquad
        \{p,|q|\}=\{p',|q'|\}.
\]
\end{proposition}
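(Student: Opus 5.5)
Since $K$ is slice, $K_{p,q}$ is concordant to the $(p,q)$-cable of the unknot, that is, to $T_{p,q}$. (Apply the concordance from $K$ to the unknot inside a thickened solid torus.) The hypothesis therefore becomes: $T_{p,q}$ is concordant to $L_{p',q'}$, where now only the companion $L$ is unknown. The plan is to compare Levine--Tristram signature functions, which agree away from finitely many points of the circle for concordant knots, and to use Fourier analysis to kill the unknown contribution of $L$. By Litherland's formula~\cite[Theorem~2]{Litherland-signatures}, away from finitely many points,
\[
        \sigma_{T_{p,q}}(\omega)-\sigma_{T_{p',q'}}(\omega)=\sigma_L(\omega^{p'}).
\]
Write $f(x)=\sigma_{T_{p,q}}(e^{2\pi i x})-\sigma_{T_{p',q'}}(e^{2\pi i x})$. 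The right-hand side is a function of $p'x$, so $f$ is $1/p'$-periodic. Equivalently, every Fourier coefficient $\widehat I_m$ of $f$ vanishes for $p'\nmid m$, by the same elementary computation used in the proof of Proposition~\ref{prop:cable-parameters}.

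First I take $m=1$. This gives $\widehat I(T_{p,q})=\widehat I(T_{p',q'})$, exactly as in Remark~\ref{rmk:KandJLTsignature}. Since $\widehat I(T_{a,\pm1})=0$, this yields $|q|=1\iff|q'|=1$. When $|q|,|q'|>1$, Borodzik's formula $\widehat I(T_{a,b})=\frac{\operatorname{sgn}(b)}{\pi}\cot(\pi/(a|b|))$ (the mirror flips sign) gives $\operatorname{sgn}(q)=\operatorname{sgn}(q')$ and $N:=p|q|=p'|q'|$. After mirroring I may assume $q,q'>0$.

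It remains to show $\{p,q\}=\{p',q'\}$ given $pq=p'q'=N$. I would use the explicit step-function description. The signature of $T_{a,b}$ (with $ab=N$) jumps only at $x=k/N$ with $a\nmid k$ and $b\nmid k$, and it vanishes for $0<x<1/N$. Since $f$ is identically $0$ near $x=0^+$, its $1/p'$-periodicity forces $f\equiv0$ on a neighborhood of each point $j/p'=jq'/N$. I would then examine the jump sets, i.e., the roots of the respective Alexander polynomials. Because $T_{p',q'}$ has no jump at multiples of $q'/N$, while $T_{p,q}$ jumps at $k/N$ unless $p\mid k$ or $q\mid k$, the condition that $f$ has no jump at $x=q'/N$ forces $p\mid q'$ or $q\mid q'$. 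By the symmetric argument, which uses that $f$ is also $1/p'$-periodic and hence determined by one period, one gets a divisibility relation between $p'$ and $\{p,q\}$. Combined with $pq=p'q'$ and $\gcd(p,q)=\gcd(p',q')=1$, this should force $\{p,q\}=\{p',q'\}$.

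As a backup, I would instead use the higher coefficients $\widehat I_m(T_{a,b})$, obtained as a limit of Borodzik's formula (as in the appendix). Taking $m=p$ when $p'\nmid p$ (or $m=q$ otherwise), the formula should distinguish whether $m$ is divisible by a cabling parameter of each torus knot.

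The main obstacle is this last step. Only the periodicity of $f$ is available, not its vanishing, so the jump bookkeeping must account for possible cancellation between jumps of the two torus knots at the same point $k/N$. I expect to handle this via the sign pattern of torus-knot signature jumps: all jumps have magnitude $2$, with signs determined by the position of $k$ relative to the lattice generated by $a$ and $b$.
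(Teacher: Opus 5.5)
Your setup is sound and matches the paper. This covers the reduction to $T_{p,q}$, the $1/p'$-periodicity of $f$, and the $m=1$ comparison, which gives $|q|=1\iff|q'|=1$, $\operatorname{sgn}(q)=\operatorname{sgn}(q')$ and $p\cdot|q|=p'\cdot|q'|=N$.

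The gap is the last step, which is the real content of the proposition. Your jump argument correctly shows that $T_{p,q}$ has no jump at $jq'/N$, i.e.\ $p\mid jq'$ or $q\mid jq'$ for $1\le j<p'$. This does not force $\{p,q\}=\{p',q'\}$. For $(p,q)=(2,15)$ and $(p',q')=(3,10)$ we have $pq=p'q'=30$ and $p\mid jq'$ for every $j$, yet the sets differ. There is also no ``symmetric argument'': $f$ is only $1/p'$-periodic, not $1/p$- or $1/q$-periodic. Moreover, whichever of $p\mid q'$ or $q\mid q'$ holds, $pq=p'q'$ already forces $p'$ to divide the other one of $p,q$. (For example, $q'=pc$ gives $q=p'c$.) So that relation adds no constraint. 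Excluding such pairs requires comparing signed jumps at points that are not multiples of $1/p'$. In the example, $f$ has jump $0$ at $1/30$ but a nonzero jump at $1/30+2/3$. That is exactly the bookkeeping you defer as the ``main obstacle'', so as written the argument stops short of the conclusion.

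Your backup is the paper's route, but it needs two repairs.

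First, the choice of $m$. The paper observes that if $\{p,q\}\neq\{p',q'\}$, then one of $p,q$ (say $p$) is divisible by neither $p'$ nor $q'$; this uses $pq=p'q'$ and the coprimality of both pairs. With $m=p$, the coefficient $\widehat I_p(T_{p',q'})=\frac{1}{\pi p}\cot(\pi p/N)$ has no correction term. Comparing with $\widehat I_p(T_{p,q})$ then forces $\cot(\pi p/q)=0$. Your rule ``$m=p$ when $p'\nmid p$'' breaks down when $q'\mid p$, e.g.\ $(p,q)=(6,5)$ and $(p',q')=(15,2)$. There both coefficients carry correction terms, and you only get $p\cot(\pi p/q)=q'\cot(\pi p/p')$, which is not evidently contradictory.

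Second, $\cot(\pi p/q)=0$ can actually hold, namely when $q=2$ and $p$ is odd. The paper handles this with a second comparison at $m=2$, which is allowed because then $p',q'\neq 2$. This yields $\cot(2\pi/p)=0$, which is impossible for odd $p>1$.

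With these two steps supplied, the Fourier-coefficient route gives a complete proof.
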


\begin{proof}
It is enough to prove the case $q>0$; the case $q<0$ is identical after
mirroring. By Remark~\ref{rmk:KandJLTsignature}, if $q=1$, then $|q'|=1$, so
we may assume that $q>1$. Moreover, we have $pq=p'q'$. Set $N=pq=p'q'$.

For a knot $J$ and a positive integer $m$, we generalize $\widehat{I}(J)$ by defining
\[
        \widehat{I}_m(J)
        :=
        \int_0^1
        \sigma_J(e^{2\pi i x})e^{-2\pi i mx}\,dx.
\]
Thus $\widehat{I}_1(J)=\widehat{I}(J)$. Set
\[
        F(x)
        =
        \sigma_{T_{p,q}}(e^{2\pi i x})
        -
        \sigma_{T_{p',q'}}(e^{2\pi i x}),
\] then by Litherland's cabling formula
\cite[Theorem~2]{Litherland-signatures}, we have
\[
        F(x)=\sigma_L(e^{2\pi i p'x})
\]
away from finitely many points. Note that $F$ is $1/p'$-periodic, therefore as in proof of \Cref{prop:cable-parameters} if $p'\nmid m$, then 
$$\widehat I_m(T_{p,q}) \,=\, \widehat I_m(T_{p',q'}).$$
For the computation, we again use Borodzik's formula~\cite[Section~3]{Borodzik-rho-iterated}. For relatively prime integers $a,b>1$ and an integer $m>0$ with $ab\nmid m$, taking the limit $\beta\to-2m$, with $\beta$ as in Borodzik's notation, gives
\begin{equation}\label{eq:ihatm}
\widehat{I}_m(T_{a,b})
=
\begin{cases}
\displaystyle
\frac{1}{\pi m}\cot\left(\frac{\pi m}{ab}\right),
& a\nmid m,\ b\nmid m, \\[1.2em]
\displaystyle
\frac{1}{\pi m}
\left\{
\cot\left(\frac{\pi m}{ab}\right)
-
a\cot\left(\frac{\pi m}{b}\right)
\right\},
& a\mid m,\ b\nmid m, \\[1.2em]
\displaystyle
\frac{1}{\pi m}
\left\{
\cot\left(\frac{\pi m}{ab}\right)
-
b\cot\left(\frac{\pi m}{a}\right)
\right\},
& b\mid m,\ a\nmid m.
\end{cases}
\end{equation}
Indeed, after setting $t=\pi\beta/2$, the three cases follow from the standard first-order expansions of $\sin t$ and the cotangent factors at $t=-\pi m$.

Suppose, toward a contradiction, that $\{p,q\}\neq\{p',q'\}$. Since
$pq=p'q'$ and $\gcd(p,q)=1$, at least one of $p$ and $q$ is divisible
by neither $p'$ nor $q'$. After interchanging $p$ and $q$ if necessary,
assume that
\begin{equation}\label{eq:p'q'notdividep}
    p'\nmid p
        \qquad\text{and}\qquad
        q'\nmid p.
\end{equation}
In particular, $p'\nmid p$, so the periodicity argument above gives
\[
        \widehat I_p(T_{p,q})
        =
        \widehat I_p(T_{p',q'}).
\]
On the other hand, Equation~\eqref{eq:ihatm}, together with
$N=pq=p'q'$, gives
\[
\widehat I_p(T_{p,q})
=
\frac{1}{\pi p}
\left\{
\cot\left(\frac{\pi p}{N}\right)
-p\cot\left(\frac{\pi p}{q}\right)
\right\} \qquad \text{ and } \qquad
\widehat I_p(T_{p',q'})
=
\frac{1}{\pi p}
\cot\left(\frac{\pi p}{N}\right).
\]
Consequently,
\[
        \cot\left(\frac{\pi p}{q}\right)=0.
\]

This can occur only when $p/q$ is a half-integer. It follows that $q=2$, and hence $p$ is odd. In this case, we instead consider $\widehat I_2$. Neither $p'$ nor $q'$ equals $2$, since otherwise we would contradict the assumption in \eqref{eq:p'q'notdividep}. In particular, $p'\nmid2$, so the periodicity argument gives
\[
        \widehat I_2(T_{p,2})
        =
        \widehat I_2(T_{p',q'}).
\]
Equation~\eqref{eq:ihatm}, together with $N=2p=p'q'$, gives
\[
\widehat I_2(T_{p,2})
=
\frac{1}{2\pi}
\left\{
\cot\left(\frac{\pi}{p}\right)
-2\cot\left(\frac{2\pi}{p}\right)
\right\}
\qquad \text{ and } \qquad 
\widehat I_2(T_{p',q'})
=
\frac{1}{2\pi}\cot\left(\frac{\pi}{p}\right).
\]
Consequently,
\[
        \cot\left(\frac{2\pi}{p}\right)=0,
\]
which is impossible because $p>1$ is odd. This proves the proposition.
\end{proof}

\bibliographystyle{alpha}
\bibliography{bib}

\end{document}